\DeclareMathOperator {\td} {td}
\DeclareMathOperator {\im} {Im}
\DeclareMathOperator {\alg} {alg}
\DeclareMathOperator {\seq} {\subseteq}
\DeclareMathOperator {\C} {\mathbb{C}}
\DeclareMathOperator {\R} {\mathbb{R}}
\DeclareMathOperator {\Z} {\mathbb{Z}}
\DeclareMathOperator {\Q} {\mathbb{Q}}
\newcommand{\s}{\mathbb{S}}
\newcommand{\N}{\mathbb{N}}
\newcommand{\vbar}{\ensuremath{\mathbf{v}}}
\newcommand{\sbar}{\ensuremath{\mathbf{s}}}
\newcommand{\zbar}{\ensuremath{\mathbf{z}}}
\newcommand{\wbar}{\ensuremath{\mathbf{w}}}
\newcommand{\re}{\mathrm{Re}}
\newcommand{\Cx}{\mathbb{C}^{\times}}
\newcommand{\albar}{{\ensuremath{\boldsymbol{\alpha}}}}
\newcommand{\bebar}{{\ensuremath{\boldsymbol{\beta}}}}
\theoremstyle {plain}
\newtheorem {theorem}{Theorem}[section]
\newtheorem {proposition} [theorem] {Proposition}
\newtheorem {conjecture} [theorem] {Conjecture}
\newtheorem {claim} {Claim}[theorem]
\theoremstyle {definition}
\newtheorem {definition}[theorem]{Definition}
\newtheorem{exercise} [theorem] {Exercise}
\theoremstyle {remark}
\newtheorem {remark} [theorem] {Remark}
\newenvironment{claimproof}{\bgroup\begin{proof}}{\end{proof}\egroup}
\title[Exponential sums and Exponential Closedness]{Exponential sums equations and the Exponential Closedness conjecture}
\author{Vahagn Aslanyan}
\email{Vahagn.Aslanyan@manchester.ac.uk}
\address{Department of Mathematics, University of Manchester, Oxford road, Manchester M13 9PL, UK}
\author{Francesco Gallinaro}
\email{francesco.gallinaro@dm.unipi.it}
\address{Dipartimento di Matematica, Universit\`a di Pisa, Largo Bruno Pontecorvo 5, 56127 Pisa, Italy.}
\begin{document}

\vspace*{-3cm}


\keywords{Exponential sum, Exponential Closedness, Zilber's conjecture}

\subjclass[2020]{}

\thanks{VA was supported by EPSRC Open Fellowship EP/X009823/1. FG was supported by project PRIN 2022 ``Models, sets and classifications'', prot. 2022TECZJA. For the purpose of open access, the authors have applied a Creative Commons Attribution (CC BY) licence to any Author Accepted Manuscript version arising from this submission.}

\maketitle
\begin{abstract}
This is an expository paper aiming to introduce Zilber's Exponential Closedness conjecture to a general audience. Exponential Closedness predicts when (systems of) equations involving addition, multiplication, and exponentiation have solutions in the complex numbers. It is a natural statement at the boundary between complex geometry and algebraic geometry. While it is open in full generality, many special cases and variants have been proven in the last two decades. 

In the first part of the paper we give a proof of a special case of the conjecture, namely, we show that exponential sums in a single complex variable, such as $e^{\sqrt{2}z}+3i e^{5z}+\pi$, always have infinitely many zeroes. This follows from well-known results in the literature and can actually be proven by standard complex analytic tools. We present a proof, motivated by the ideas of Zilber and Gallinaro, which uses simplified versions of some ingredients of more advanced techniques in the area, thus allowing the reader to explore these advanced proofs on a simple example. Our approach uses only elementary techniques (from standard undergraduate algebra and complex analysis), and the proofs of some of the main ingredients appear to be new. Moreover, these ingredients come from various areas of mathematics, such as functional transcendence and complex geometry, and are a useful way of introducing some classical concepts in these areas to the reader. 

In the second part we explain the Exponential Closedness conjecture and discuss some known special cases, not least the so-called ``raising to powers''  case (due to Gallinaro) which generalises the above-mentioned result and whose proof uses advanced versions of the tools presented in the first part of the paper.
\end{abstract}

\section{Introduction}

One of the most fundamental problems in algebra, and indeed in mathematics, is to understand whether a given equation has a solution. First, we remark that it does not make sense to simply ask if an equation has a solution. For instance, if we ask ``Does the equation $2x=1$ have a solution?'', then those who are familiar rational numbers will answer affirmatively, while those who know what integers are but not rational numbers, will give a negative answer. Therefore, it is important to to specify \textit{where} we are looking for solutions. Thus, the equation $2x=1$ has a solution in $\Q$ but not in $\Z$. 

The simplest equations usually considered in undergraduate algebra are those made up from the two basic arithmetic operations, addition and multiplication. These are known as polynomial equations, for they are of the form $p(x)=0$ where $p(X) = a_nX^n+a_{n-1}X^{n-1}+\ldots+a_1X+a_0$ is a polynomial. Polynomial equation may or may not have solutions in a given set of numbers. For instance, the equation $x^2+1=0$ has no real solution but has two complex solutions. A space where all non-trivial polynomial equations have solutions is the field of complex numbers. This is known as the \textit{Fundamental Theorem of Algebra}. From now on we will be concerned with solvability of equations in the complex numbers and will not state this explicitly every time. 

In this paper we look at more advanced equations involving not only addition and multiplication, but also complex exponentiation, and we want to understand when such equations have solutions in the complex numbers. First, recall that for a complex number $c = a+bi$, where $a$ and $b$ are real numbers, the exponential of $c$ is defined as $e^c = e^a (\cos b + i \sin b)$ where $e^a$ is the real exponential. We will use $z$ to denote a complex variable. Thus, we want to know whether equations such as $2e^{e^z}+e^{z^2}+3z^5+i=0$ have solutions in $\C$. More generally, we want to understand when systems of such equations in several variables have solutions. This question is open in full generality, however, there is a conjecture predicting when  systems of exponential equations should have solutions. We will discuss this conjecture shortly, but first we describe a certain class of exponential equations that we study and show their solvability in this paper. These are the so-called \textit{exponential sums equations}. An exponential sum is an expression of the form $\sum_{k=0}^n c_k e^{r_kz}$ where $c_k \in \C$ and $r_k \in \R$ for all $k$. Note that when each $r_k$ is a positive integer, denoting $e^z$ by $w$, we can write the above exponential sum as a polynomial in $w$, namely $\sum_{k=0}^n c_k w^{r_k}$. In this particular case the existence of a zero of the exponential sum is equivalent to the existence of a non-zero root of the corresponding polynomial, which is an easy problem to solve (using the fundamental theorem of algebra). In general, when the exponents are not necessarily integers, we can still informally think of an exponential sum as a sum of real powers of $w$. This is not precise since, given a real number $r$, the function $w^r$ is defined as $e^{r\log w}$ which is a multi-valued function due to the infinitely many branches of complex logarithm. Of course, writing $e^{rz}$ instead of $w^r$ takes care of this ambiguity, but thinking of raising to real powers rather than exponentials allows us to interpret our question as a generalisation of the fundamental theorem of algebra where the exponents of the variable are allowed to be real numbers. Note also that in number theory exponential sums usually refer to somewhat different expressions which will not be considered here.

In the first part of the paper we prove the following result.

\begin{theorem}\label{thm: exp sums one var}
    Let $n$ be a positive integer, $r_n > \ldots > r_0$ be arbitrary real numbers and $c_0, c_1,\ldots,c_n$ be non-zero complex numbers. Then the equation 
    \begin{equation}\label{eqn: sum e^{r_kz}=0}
        \sum_{k=0}^n c_k e^{r_kz} = 0 
    \end{equation}
     has infinitely many solutions.
\end{theorem}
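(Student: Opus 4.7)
The plan is to prove the statement by contradiction, using Hadamard's factorization theorem applied to the entire function $f(z) := \sum_{k=0}^n c_k e^{r_k z}$ together with a simple comparison of growth rates along the real axis. The function $f$ is entire, being a finite sum of entire functions, and has order at most $1$: indeed $|f(z)| \leq \left(\sum_k |c_k|\right) e^{R|z|}$ with $R := \max_k |r_k|$.

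Assume for contradiction that $f$ has only finitely many zeros, say $z_1, \ldots, z_m$ (counted with multiplicity). Hadamard's factorization theorem then gives
\[
    f(z) = e^{az+b} \prod_{j=1}^m (z - z_j)
\]
for some $a, b \in \C$: the canonical product is a polynomial (finitely many factors), and the exponential factor $e^{g(z)}$ has $\deg g \leq 1$ because $f$ has order $\leq 1$. I would now extract $\re(a)$ in two different ways from the real-axis asymptotics. From the factorization, $\log|f(x)| = \re(a)\,x + m\log|x| + O(1)$, so $\log|f(x)|/x \to \re(a)$ as $x \to \pm\infty$. From the sum expression, the largest-exponent term $c_n e^{r_n x}$ dominates as $x \to +\infty$, giving $\log|f(x)|/x \to r_n$; conversely, the smallest-exponent term $c_0 e^{r_0 x}$ dominates as $x \to -\infty$ (since $r_k x$ is maximized at the smallest $r_k$ when $x$ is large negative), giving $\log|f(x)|/x \to r_0$. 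Combining, $r_n = \re(a) = r_0$, contradicting the strict inequality $r_n > r_0$, so $f$ must have infinitely many zeros.

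The main (and essentially only) non-elementary ingredient is Hadamard's factorization theorem, but it is classical. If one wants to bypass it, an equally accessible alternative is to apply Jensen's formula on disks $|z| \leq R$: the same asymptotic estimate shows that $\tfrac{1}{2\pi}\int_0^{2\pi}\log|f(Re^{i\theta})|\,d\theta$ grows linearly in $R$, with leading coefficient proportional to $r_n - r_0 > 0$, which forces the zero-counting function $N(R) \to \infty$. This second route is perhaps closer in spirit to the Zilber-Gallinaro approach highlighted in the paper, where existence (and abundance) of intersections of the image of an exponential map with a hyperplane is ultimately established by a growth / degree count rather than by black-box factorization.
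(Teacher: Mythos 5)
Your proof is correct, but it is not the route the paper takes: it is essentially the first bullet of Exercise~\ref{exercise: Hadamard}, which the authors explicitly set aside as the ``standard complex analytic'' proof (adapting \cite[Corollary 2.4]{marker-remark-pseudoexp}). Your execution of it is sound: $f(z)=\sum_k c_k e^{r_k z}$ has order at most $1$, so if it had finitely many zeros Hadamard would give $f(z)=e^{az+b}Q(z)$ with $Q$ a polynomial, whence $\log|f(x)|/x\to\re(a)$ as $x\to\pm\infty$; comparing with the dominant terms $c_ne^{r_nx}$ at $+\infty$ and $c_0e^{r_0x}$ at $-\infty$ forces $r_n=\re(a)=r_0$, contradicting $r_n>r_0$ (which is available since $n\geq 1$). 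The paper's own proof is entirely different: it approximates the real exponents $r_k$ by rationals, turning the equation into a genuine polynomial equation solvable by the fundamental theorem of algebra, and then controls the resulting approximate solutions using four separate ingredients --- explicit bounds on polynomial roots (Proposition~\ref{prop: bound on roots}), density of $\{(e^{2\pi i r_1m},\dots,e^{2\pi i r_nm})\}$ in the torus (Proposition~\ref{prop: density}), algebraic independence of the functions $e^{r_kz}$ (Proposition~\ref{proposition: ax-schanuel type statement}), and an open mapping statement (Proposition~\ref{prop: open map}). What your approach buys is brevity and a single classical black box; what the paper's approach buys is a template that actually scales to the multivariable ``raising to powers'' case (Theorem~\ref{thm: raising to powers}), where Hadamard factorization is unavailable. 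One small correction to your closing remark: the Jensen's-formula variant is not really ``closer in spirit'' to the Zilber--Gallinaro method --- their argument is a deformation/limit argument, not a value-distribution or growth count --- though it does give a cleaner quantitative statement about the number of zeros in $|z|\leq R$.
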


This theorem is rather well known: it can be proven quite easily using standard tools from complex analysis (see Exercise~\ref{exercise: Hadamard})\footnote{Thanks to Aleksei Kulikov and Alex Wilkie for pointing this out.}, and it is a special case of the much more general results of Henson--Rubel \cite{henson-rubel}, Gallinaro \cite{Gallinaro-exp-sums-trop}, and Mantova--Masser \cite{mantova-masser}. Our proof is based on ideas of Zilber \cite{Zilb-exp-sum-published,ZilbExpSum} and Gallinaro \cite{Gallinaro-exp-sums-trop} and thus allows the reader to explore these ideas on a simple example. Nevertheless, there are some novelties in the way we present the main ingredients of the proof. Thus, the focus here is on these ingredients in the proof of Theorem~\ref{thm: exp sums one var} rather than the theorem itself.

The key idea (originally due to Khovanskii) of the proof is that if we approximate the real exponents $r_k$ in \eqref{eqn: sum e^{r_kz}=0} by rational numbers, then the resulting equation will be an algebraic equation and thus will have solutions. So it is natural to expect that a limit of these solutions as we let the exponents tend to the $r_k$'s would be a solution to \eqref{eqn: sum e^{r_kz}=0}. The problem is that these approximate solutions may escape off to infinity at the limit, breaking the whole argument. Thus, the main difficulty is to bound these solutions. More precisely, to generate a new sequence from this sequence of solutions, the limit of which gives us the desired zero of the exponential sum. 

The main ingredients of our proof include: estimates on the roots of a polynomial in terms of its coefficients, density of integral multiples of certain vectors in $\R^n$ modulo $\Z^n$, algebraic independence of certain exponential functions which is a special case of Ax's famous theorem from functional transcendence, and a particular case of the open mapping theorem from complex analytic geometry. We provide a full proof in each case; even though all of these statements are well-known results (mostly as a special case of a more complicated theorem), only one of the proofs is directly borrowed from the literature the rest containing some potentially new ideas or techniques. We stress that all of these proofs require only basic knowledge of undergraduate mathematics and should be accessible to a wide audience. Moreover, these ingredients are simple versions of classical statements in various areas of mathematics (functional transcendence, complex geometry, equidistributed sequences) and, along with their elementary proofs, can be studied on their own right and independently from each other. This will familiarise the reader with some ideas and tools in the aforementioned areas of mathematics. We also have some exercises that will help the reader further explore some of the presented topics.

The second part of the paper is about solvability of systems of exponential equations in several variables. Here the \textit{Exponential Closedness} conjecture, due to Zilber \cite{Zilb-pseudoexp,Zilb-exp-sum-published}, predicts that \textit{rotund} and \textit{free} systems must have complex solutions. Rotundity and freeness are geometric properties of the system under consideration which ensure that the system, and every subsystem, is not \textit{overdetermined}. Intuitively, a system is overdetermined if it contains more equations than variables. We do not expect such systems to have any solutions, e.g. $z^2=1, e^z=2$ has no complex solutions. In the second part of the paper we define freeness and rotundity and give a precise statement of the Exponential Closedness conjecture. Although it is open, many special cases have been established in recent years, a survey of which is also given in this paper. A special emphasis is put on the so-called \textit{raising to powers} case of the conjecture, which is a much more general version of Theorem~\ref{thm: exp sums one var} and was proved by Gallinaro in \cite{Gallinaro-exp-sums-trop} (generalising earlier results of Zilber \cite{ZilbExpSum,Zilb-exp-sum-published}). 

The second part of the paper is somewhat more advanced than the first part (requiring familiarity with basic algebraic geometry) but should still be accessible to graduate and advanced undergraduate students in mathematics.

\section{Exponential sums in one variable}

In this section we prove Theorem~\ref{thm: exp sums one var}. Before presenting our proof, we point out how it can be obtained by standard complex analytic tools.

\begin{exercise}\label{exercise: Hadamard}
\begin{itemize}
\item[]
    \item Adapt the proof of \cite[Corollary 2.4]{marker-remark-pseudoexp} and deduce Theorem~\ref{thm: exp sums one var} from Hadamard's factorisation theorem (a statement of which can be found in the same paper).
    \item Give an alternative proof of Theorem~\ref{thm: exp sums one var} based on the following ideas. If an exponential sum has no zeroes, then it can be written as $e^{f(z)}$ for some entire function $f$. Show that the real part of $f$ is then bounded by a linear function of $|z|$. Use the Borel-Caratheodory theorem to get a similar bound for $|f(z)|$. Use Liouville's theorem to conclude that $f$ must be linear, which then should imply that $f$ has a single exponential term. Finally, adapt the argument to show that an exponential sum with at least two terms has inifnitely many zeroes.

    Can these ideas be used to prove a more general result, e.g. when the $r_k$'s in \eqref{eqn: sum e^{r_kz}=0} are complex numbers?
\end{itemize}
\end{exercise}

The first approach was suggested by Aleksei Kulikov and the second by Alex Wilkie. While Hamadamard's factorisation theorem is often omitted from basic complex analysis courses, the second approach uses more basic tools usually covered in such courses.

Now we turn to our proof which follows the ideas of Zilber and Gallinaro. We establish some auxiliary results first. These come from various areas of mathematics and are interesting in their own right.

\subsection{Bounds on roots of a polynomial}

Here we show how the roots of a polynomial can be bounded in terms of its degree and coefficients.

\begin{proposition}\label{prop: bound on roots}
    Let $N>n>m$ be positive integers and let $p(w) = c_N w^N + \sum_{k=m}^n c_kw^k + c_0$ be a polynomial with $c_0 \neq 0,~ c_N \neq 0$. Define \[ R_1:=\max\left\{ 1, \frac{1}{|c_N|}\left(|c_0|+\sum_{k=m}^{n}|c_k|\right) \right\} \text{ and } R_0 :=\min \left\{ 1, \frac{|c_0|}{|c_N|+\sum_{k=m}^n|c_k|} \right\}.\] Then all roots of $p(w)$ satisfy $R_0^{\frac{1}{m}} \leq |w| \leq R_1^{\frac{1}{N-n}}$.
\end{proposition}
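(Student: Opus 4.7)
The plan is to prove the two bounds separately, each by contradiction, using the triangle inequality on the defining equation together with the fact that $R_1\geq 1$ and $R_0\leq 1$ by construction (these normalisations are what allow us to compare different powers of $|w|$).

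For the upper bound, I would suppose for contradiction that $w$ is a root with $|w|>R_1^{1/(N-n)}$. Since $R_1\geq 1$, this forces $|w|>1$, so in particular $|w|^k\leq |w|^n$ for every $0\leq k\leq n$. From $p(w)=0$ I rearrange to $c_Nw^N=-c_0-\sum_{k=m}^n c_kw^k$, apply the triangle inequality, use $|c_0|\leq |c_0||w|^n$ and $|w|^k\leq|w|^n$ to obtain
\[
|c_N|\,|w|^N\;\leq\;|w|^n\Bigl(|c_0|+\sum_{k=m}^n|c_k|\Bigr).
\]
Dividing by $|c_N|\,|w|^n$ (both positive) gives $|w|^{N-n}\leq \tfrac{1}{|c_N|}\bigl(|c_0|+\sum_{k=m}^n|c_k|\bigr)\leq R_1$, contradicting the assumption.

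For the lower bound, the argument is symmetric around the constant term. I would suppose $|w|<R_0^{1/m}$; since $R_0\leq 1$, this forces $|w|<1$, so $|w|^k\leq |w|^m$ for every $k\geq m$, and also $|w|^N\leq |w|^m$ because $N>m$. Rewriting $p(w)=0$ as $c_0=-c_Nw^N-\sum_{k=m}^n c_kw^k$ and applying the triangle inequality yields
\[
|c_0|\;\leq\;|w|^m\Bigl(|c_N|+\sum_{k=m}^n|c_k|\Bigr),
\]
whence $|w|^m\geq \tfrac{|c_0|}{|c_N|+\sum_{k=m}^n|c_k|}\geq R_0$, again contradicting the assumption. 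Note that $c_0\neq 0$ is essential here (otherwise the bound would be vacuous), just as $c_N\neq 0$ is essential in the upper bound.

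There is no real obstacle; the only subtle point is recognising that the maxima and minima with $1$ in the definitions of $R_1$ and $R_0$ are not cosmetic but serve to pin $|w|$ on the correct side of $1$ so that monomial comparisons $|w|^k\leq|w|^n$ or $|w|^k\leq|w|^m$ go through uniformly. Once this is in place, both bounds follow from a single application of the triangle inequality.
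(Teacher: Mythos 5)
Your proof is correct and is essentially the same argument as the paper's: both bounds follow from the triangle inequality applied to $p(w)=0$, using the normalisations $R_1\geq 1$ and $R_0\leq 1$ to compare powers of $|w|$; the only difference is that you phrase it as a contradiction while the paper proves the contrapositive ($p(w)\neq 0$ for $|w|$ outside the stated range).
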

\begin{proof}
     Observe that when $k\leq n$ and $|w|>R_1^{\frac{1}{N-n}}$ we have $|w|^{n} \geq |w|^{k}$ and \[ |c_N| |w|^{N} = |c_N| |w|^{N-n} |w|^n > R_1 |c_N| |w|^{n} \geq |c_0|+ \sum_{k=m}^{n} |c_k| |w|^{k} \geq \left| c_0+\sum_{k=m}^{n} c_k w^{k} \right| \] where the last inequality follows from the triangle inequality. Thus, if $|w|>R_1^{\frac{1}{N-n}}$ then $p(w)\neq 0$.

     Similarly, if $|w|<R_0^{\frac{1}{m}}$ then for all $k\geq m$ we have $|w|^{k} \leq |w|^m<R_0$ and 
     \[  \left| \sum_{k=m}^n c_k w^{k} + c_Nw^N \right| \leq  \sum_{k=m}^n |c_k| |w|^{k} + |c_N| |w|^N  < R_0 \left( \sum_{k=m}^n |c_k| +|c_N| \right) \leq |c_0|. \]
     So for such $w$ we have $p(w)\neq 0$.
\end{proof}

\subsection{Density of certain sequences in tori}

It is well known and quite easy to show that if $r\in \R$ is a rational number then the integral multiples of $r$ reduced modulo $1$ form a finite set. On the other hand, if $r$ is irrational, then its integral multiples modulo $1$ form a dense subset of $[0,1)$. Here reducing modulo $1$ just means that we replace a real number by its fractional part: for a real number $x$ we define the \textit{integral part} of $x$, denoted $[x]$, to be the largest integer not exceeding $x$ and we let the \textit{fractional part} of $x$, written $\{ x \}$, be $x-[x]$. This notation may be confusing as braces are normally used to denote sets. To avoid confusion, we do not refer to fractional part outside this subsection and here $\{ x \}$ always denotes the fractional part.

\begin{exercise}
    Let $r\in \R$. Prove that the set of all fractional parts $\{ nr \}$ as $n$ ranges over the integers is dense in $[0,1)$ if and only if $r\notin \Q$.
\end{exercise}

The following is a generalisation of this result to higher dimensions, written in terms of exponentials. 

\begin{proposition}\label{prop: density}
    Let $r_1,\ldots,r_n \in \R$ be real numbers such that the numbers $1, r_1, \ldots, r_n$ are linearly independent over $\Q$. Then the set $\{ ( e^{2\pi i r_1 m}, \ldots, e^{2 \pi i r_n m}): m \in \Z \}$ is dense in $\s_1^n$.
\end{proposition}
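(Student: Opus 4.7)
The plan is to view this as the classical Kronecker--Weyl equidistribution theorem. Identify $\s_1^n$ with the compact torus $\T^n := (\R/\Z)^n$ via $(x_1, \ldots, x_n) \mapsto (e^{2\pi i x_1}, \ldots, e^{2\pi i x_n})$; the set in the proposition corresponds to the cyclic orbit $O := \{m\mathbf{r} + \Z^n : m \in \Z\}$ generated by $\mathbf{r} := (r_1, \ldots, r_n)$, and the goal is to show that $\overline{O} = \T^n$.

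I would deduce this from the stronger assertion that $(m\mathbf{r})_{m \in \N}$ is equidistributed modulo $\Z^n$, via Weyl's criterion: a sequence $(x_m)$ in $\T^n$ is equidistributed (and a fortiori dense) if and only if for every $k \in \Z^n \setminus \{0\}$ one has $N^{-1} \sum_{m=1}^N e^{2\pi i \langle k, x_m \rangle} \to 0$ as $N \to \infty$. Applied to $x_m = m\mathbf{r}$, the Weyl sum is a geometric progression of common ratio $\zeta_k := e^{2\pi i \langle k, \mathbf{r} \rangle}$. The $\Q$-linear independence of $1, r_1, \ldots, r_n$ says exactly that $\langle k, \mathbf{r} \rangle = k_1 r_1 + \ldots + k_n r_n \notin \Z$ for every nonzero $k \in \Z^n$, so $\zeta_k \neq 1$ and $\left| \sum_{m=1}^N \zeta_k^m \right| \leq 2/|1 - \zeta_k|$ is bounded independently of $N$; hence the average is $O(1/N)$, and density follows.

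A purely self-contained alternative proceeds by induction on $n$: the closure $H := \overline{O}$ is a closed subgroup of $\T^n$, projects onto $\T^{n-1}$ by the inductive hypothesis applied to $(r_1, \ldots, r_{n-1})$ (whose $\Q$-linear independence with $1$ is inherited), and has fibre $K := H \cap (\{0\}^{n-1} \times \T)$ which is a closed subgroup of $\T$. If $K = \T$ then $H = \T^n$; otherwise $K$ is finite of some order $d \geq 1$, in which case the structure theory of closed subgroups of $\T^n$ forces $H$ to be cut out by a single congruence $d x_n \equiv a_1 x_1 + \ldots + a_{n-1} x_{n-1} \pmod{\Z}$ with $a_j \in \Z$, and substituting the generator $\mathbf{r}$ produces the contradicting relation $d r_n - a_1 r_1 - \ldots - a_{n-1} r_{n-1} \in \Z$. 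The main obstacle in either approach is the underlying structural input: Stone--Weierstrass on $\T^n$ to approximate box indicator functions by trigonometric polynomials for Weyl's criterion, or the classification of continuous homomorphisms $\T^{n-1} \to \T$ as integer-linear forms modulo $\Z$ for the inductive argument. Both are standard and elementary, fitting the paper's emphasis, so the bulk of the work lies in setting up these tools rather than in the Kronecker--Weyl step itself.
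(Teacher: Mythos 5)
Your argument is correct, but it takes a genuinely different route from the paper. The paper reproduces Lettenmeyer's elementary geometric proof (following Hardy--Wright), and only for $n=2$: one takes a limit point of the bounded set $P_m=(\{r_1m\},\{r_2m\})$, and either finds three nearby non-collinear points whose generated lattice is $\varepsilon$-dense in the unit square, or derives from persistent collinearity a determinant identity that forces $[kr_1]/(k-1)$ to equal the irrational number $r_1$ in the limit --- a contradiction; the general $n$ is left as an exercise. Your first route is the classical Kronecker--Weyl argument: the hypothesis that $1,r_1,\ldots,r_n$ are $\Q$-linearly independent is exactly the statement that $\langle k,\mathbf{r}\rangle\notin\Z$ for all nonzero $k\in\Z^n$, so each Weyl sum is a geometric progression with ratio $\neq 1$ and the averages are $O(1/N)$. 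This is correct, works uniformly in $n$, and yields equidistribution, which is strictly stronger than the density the paper needs; the price is that Weyl's criterion itself rests on approximating indicators of boxes by trigonometric polynomials (Stone--Weierstrass or an explicit Fej\'er-kernel construction), which is more machinery than the paper's self-imposed elementary standard. Your second, inductive route is also sound, though it can be streamlined: once you know $H:=\overline{O}$ is a closed subgroup of $(\R/\Z)^n$, properness of $H$ is equivalent to the existence of a nonzero character $k\in\Z^n$ annihilating $H$, and evaluating that character on the generator $\mathbf{r}$ immediately contradicts the independence hypothesis --- there is no need to pass through the fibre $K$ and the explicit congruence. The real content there is the classification of closed subgroups (equivalently, of continuous characters) of the torus, which again is standard but not free. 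In short: both of your approaches are valid and more scalable than the paper's, while the paper's choice trades generality for a proof readable with no harmonic analysis or topological group theory at all.
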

\begin{proof}
We present a proof for $n=2$, which is due to Lettenmeyer and is taken from \cite{hardy-wright}. First observe that it suffices to show that the set of points $ P_m:=  (\{r_1m\}, \{r_2m\})$ with $ m \in \Z$, where $\{ x \} = x-[x]$ is the fractional part of $x$, is dense in $[0,1)^2$.

Let $\varepsilon>0$ be sufficiently small. We need to show that for every point of the unit square there is a $P_m$ within an $\varepsilon$ distance from that point.  Since the set of points $P_m$ is bounded, it must have a limit point, i.e. there is a point $P_{m_0}$ such that for infinitely many $m$ the distance between $P_m$ and $P_{m_0}$ is less than $\varepsilon$. We may assume without loss of generality that $m_0=1$.\footnote{Justifying this is left to the reader as an exercise.} 

Now, if there are $k, l$ such that $|P_1P_k|<\varepsilon$ and $|P_1P_l|< \varepsilon$ and $P_1,P_k,P_l$ are not collinear then the points in the lattice generated by these three points are among the $P_m$'s and every point in the unit square is within an $\varepsilon$ distance of one of these points. So we are done in this case.

Now suppose that for all $k, l$ with $|P_1P_k|<\varepsilon$ and $|P_1P_l|< \varepsilon$, the points $P_1,P_k,P_l$ are collinear. Recall that $k$ and $l$ can be taken arbitrarily large. Collinearity means that

\[ 0 = \begin{vmatrix}
r_1 & r_2 & 1\\
\{ kr_1 \} & \{ kr_2 \} & 1\\
\{ lr_1 \} & \{ lr_2 \} & 1
\end{vmatrix} =
\begin{vmatrix}
r_1 & r_2 & 1\\
[kr_1] & [kr_2] & k-1\\
[lr_1] & [lr_2] & l-1
\end{vmatrix} .
\]
However, $r_1, r_2, 1$ are $\Q$-linearly independent, hence 
\[  \begin{vmatrix}
[kr_1]  & k-1\\
[lr_1]  & l-1
\end{vmatrix} = 0. \]
Thus, \[ \frac{[kr_1]}{k-1} = \frac{[lr_1]}{l-1}. \]

Now we can fix $k$ and let $l$ tend to infinity. Then the left hand side is a constant rational number while the right hand side tends to $r_1$ which is irrational. This contradiction finishes the proof.    
\end{proof}

\begin{exercise}
    Generalise the above proof to arbitrary $n$.
\end{exercise}

\subsection{Algebraic independence of certain exponential functions}

\begin{definition}
    Let $f_1,\ldots,f_n$ be complex functions defined on some complex domain $U \seq \C$. We say that they are \textit{algebraically independent} over $\C$ if there is no polynomial $p(Y_1,\ldots,Y_n)\in \C[Y_1,\ldots,Y_n]$ for which $p(f_1(z),\ldots,f_n(z))$ is identically $0$ on $U$.
\end{definition}

For example, the functions $z$ and $z^2$ are algebraically dependent while $z$ and $e^z$ are algebraically independent.

\begin{exercise}
    Prove that $z$ and $e^z$ are algebraically independent.
\end{exercise}

\begin{proposition}\label{proposition: ax-schanuel type statement}
    Let $r_1,\ldots,r_n$ be real numbers such that $r_1,\ldots,r_n$ are $\Q$-linearly independent. Then the functions $e^{r_1z},\ldots,e^{r_nz}$ are algebraically independent over $\C$.
\end{proposition}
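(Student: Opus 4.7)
The plan is to reduce algebraic independence over $\C$ to the \emph{linear} independence of exponentials $e^{sz}$ with pairwise distinct real exponents, and then to prove that linear independence by a short induction (or, equivalently, via a Vandermonde/Wronskian computation).

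First, suppose for contradiction that $p(Y_1, \ldots, Y_n) \in \C[Y_1, \ldots, Y_n]$ is a nonzero polynomial with $p(e^{r_1 z}, \ldots, e^{r_n z}) \equiv 0$. Writing $p = \sum_{\alpha} c_\alpha Y_1^{\alpha_1} \cdots Y_n^{\alpha_n}$, substitution gives
\[
\sum_{\alpha} c_\alpha e^{(\alpha_1 r_1 + \cdots + \alpha_n r_n) z} \equiv 0,
\]
where $\alpha$ ranges over the (finite) support of $p$. The $\Q$-linear independence of $r_1, \ldots, r_n$ guarantees that the real exponents $s_\alpha := \alpha_1 r_1 + \cdots + \alpha_n r_n$ are pairwise distinct for distinct multi-indices $\alpha$. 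So it suffices to prove the following claim: for any pairwise distinct reals $s_1, \ldots, s_N$, the functions $e^{s_1 z}, \ldots, e^{s_N z}$ are linearly independent over $\C$, since this will force all $c_\alpha = 0$, contradicting $p \neq 0$.

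I would prove the claim by induction on $N$. The base case $N=1$ is trivial. For the inductive step, assume $\sum_{i=1}^N c_i e^{s_i z} \equiv 0$. Dividing by $e^{s_N z}$ and differentiating eliminates the constant term, giving
\[
\sum_{i=1}^{N-1} c_i (s_i - s_N) e^{(s_i - s_N) z} \equiv 0.
\]
The exponents $s_i - s_N$ are pairwise distinct, so by the inductive hypothesis each $c_i(s_i - s_N) = 0$; since $s_i \neq s_N$, this forces $c_i = 0$ for $i < N$, whence $c_N = 0$ as well. (Alternatively, one can observe that the Wronskian of $e^{s_1 z}, \ldots, e^{s_N z}$ equals $e^{(s_1 + \cdots + s_N)z}$ times the Vandermonde determinant $\prod_{i<j}(s_j - s_i)$, which is nonzero precisely because the $s_i$ are distinct.)

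There is no real obstacle in this argument — the only point that needs care is verifying that distinct multi-indices $\alpha$ give distinct exponents $s_\alpha$, but this is exactly what $\Q$-linear independence of $r_1, \ldots, r_n$ provides, since $s_\alpha = s_\beta$ would mean $\sum_i (\alpha_i - \beta_i) r_i = 0$ with integer (hence rational) coefficients, forcing $\alpha = \beta$. Note that $\Q$-linear independence of $r_1, \ldots, r_n$ (as opposed to $1, r_1, \ldots, r_n$, which appeared in Proposition~\ref{prop: density}) is exactly the right hypothesis here, since we do not need to separate the exponent $s_\alpha = 0$ from nonzero integer combinations; the constant monomial in $p$ only contributes the exponent $0$ once.
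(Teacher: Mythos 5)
Your proposal is correct and follows essentially the same route as the paper: substitute, use $\Q$-linear independence of the $r_i$ to see that distinct monomials yield pairwise distinct real exponents, and then establish linear independence of $e^{s_1z},\ldots,e^{s_Nz}$ by a differentiation argument. The paper carries out that last step by differentiating $m$ times and invoking the nonvanishing of a Vandermonde determinant, which is precisely the alternative you mention in your parenthetical; your induction with a single division-and-differentiation per step is an equally valid minor variant.
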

\begin{proof}
    Suppose there is a polynomial $p$ with $p(e^{r_1z},\ldots,e^{r_nz})=0$. Let \[ p(\wbar) = \sum_{(k_1,\ldots,k_n)} c_{(k_1,\ldots,k_n)} w_1^{k_1}\cdots w_n^{k_n}.\]
    Then \[ p(e^{r_1z},\ldots,e^{r_nz}) = \sum_{(k_1,\ldots,k_n)} c_{(k_1,\ldots,k_n)} e^{(k_1r_1+\ldots +k_nr_n)z} .\]
    Since $r_1,\ldots,r_n$ are linearly independent over $\Q$, for any two tuples $(k_1,\ldots,k_n)\neq (k'_1,\ldots,k'_n)$ we have $k_1r_1+\ldots +k_nr_n \neq k'_1r_1+\ldots +k'_nr_n$. We then see that for a list of pairwise distinct real numbers $t_1,\ldots,t_m$ and non-zero complex numbers $b_1,\ldots,b_m$ we have
    \[ b_1 e^{t_1z}+ \ldots + b_n e^{t_nz} = 0. \]
    Differentiating this equality $m$ times we get 
    \[  \begin{pmatrix}
1 & 1 &  \dots & 1 \\ 
t_1 & t_2  & \dots & t_m \\
\vdots & \vdots & \ddots & \vdots \\
t_1^m & t_2^m  & \dots & t_m^m
\end{pmatrix} \cdot \begin{pmatrix}
b_1e^{t_1z}\\ 
b_2e^{t_2z} \\
\vdots \\
b_me^{t_mz}
\end{pmatrix} = 0.
\] 
The matrix on the left is a Vandermonde matrix whose determinant does not vanish, for the $t_k$'s are pairwise distinct. This yields a contradiction which finishes the proof.
\end{proof}

This theorem is a special case of the famous Ax-Schanuel theorem which plays an important role in many problems in differential algebra, number theory, and complex geometry. We do not need the full theorem in this paper but we state it below for the convenience of the reader.

\begin{theorem}[Ax-Schanuel {\cite{Ax}}]\label{Axsch}
    Let $f_1,\ldots,f_n$ be complex holomorphic functions defined on some domain $U\seq \C$. Assume that no $\Q$-linear combination of these functions is constant. Then $\td_{\C}\C(f_1,\ldots,f_n, e^{f_1},\ldots,e^{f_n})\geq n$.
\end{theorem}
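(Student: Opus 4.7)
The plan is to reduce Ax--Schanuel to a purely algebraic statement about a differential field and then to exploit the special differential relation satisfied by the exponential. I would pass to the field $K$ of germs of meromorphic functions at a point of $U$, equipped with the derivation $\partial = d/dz$; its field of constants is $\C$. Writing $y_i := f_i$ and $x_i := e^{f_i}$, the analytic identities $(e^{f_i})' = f_i'\cdot e^{f_i}$ become the purely algebraic identities $\partial x_i = (\partial y_i)\, x_i$ in $K$, and the hypothesis becomes: no nontrivial $\Q$-linear combination of the $y_i$ lies in $\C$. The goal is to show $\td_\C \C(y_1,\ldots,y_n,x_1,\ldots,x_n) \geq n$.

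I would argue by contradiction: suppose the transcendence degree is at most $n-1$. Pick a nonzero polynomial $P(\Y,\mathbf{X}) \in \C[\Y,\mathbf{X}]$ of minimal combinatorial complexity (for instance, minimal number of monomials in its support) such that $P(\bar y, \bar x) = 0$. Apply $\partial$ to this identity, using $\partial x_i = (\partial y_i)x_i$, to produce a second polynomial identity $Q(\bar y, \bar x) = 0$. The key observation is that each monomial $x_1^{k_1}\cdots x_n^{k_n}$ in $Q$ acquires a linear factor $\sum_i k_i\, \partial y_i$ coming from the exponential relation. By minimality, $Q$ must be a scalar multiple of $P$ modulo the ideal of relations holding between the $y_i$ and $x_i$.

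The main obstacle --- and the heart of Ax's original argument in \cite{Ax} --- is extracting from this comparison a nontrivial integer relation of the form $\sum_i k_i\, \partial y_i = 0$ in $K$, where $(k_i)\in \Z^n\setminus\{0\}$ is an exponent vector occurring in the support of $P$. Once this is achieved, integration (legitimate because the $k_i$ are scalars) yields $\sum_i k_i y_i \in \C$, contradicting the hypothesis. Isolating the genuinely rational-linear constraint on the support of $P$ requires a careful combinatorial--differential-algebraic manipulation that is specific to the exponential function and accounts for the bulk of Ax's 1971 paper; I would refer the reader there for the detailed execution rather than reproduce it here, since the full result is not actually needed for Theorem~\ref{thm: exp sums one var} --- only the very special case proven in Proposition~\ref{proposition: ax-schanuel type statement} above is used.
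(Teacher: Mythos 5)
The paper does not prove this theorem at all: it states it ``for the convenience of the reader,'' cites \cite{Ax}, and explicitly notes that only the special case in Proposition~\ref{proposition: ax-schanuel type statement} is actually used. So there is no in-paper argument to compare yours against, and your closing remark --- that the full result is not needed for Theorem~\ref{thm: exp sums one var} --- matches the paper's own stance exactly.

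That said, judged as a proof your proposal has a genuine gap, and it is precisely the one you flag yourself: the step ``by minimality, $Q$ must be a scalar multiple of $P$, whence an integer relation $\sum_i k_i\,\partial y_i=0$'' is not carried out, and it is not a routine verification. Two concrete difficulties. First, after applying $\partial$, the coefficient of a monomial $x_1^{k_1}\cdots x_n^{k_n}$ in $Q$ involves the nonconstant functions $\partial y_i$, so $Q$ no longer has coefficients in $\C$ and ``minimal support over $\C$'' does not directly let you compare $P$ and $Q$; one must instead work over a differential subfield containing the $y_i$, and the conclusion one extracts is of Kolchin--Ostrowski type (some monomial $\prod_i x_i^{m_i}$ lies in that subfield), not immediately the desired integer relation among the $\partial y_i$. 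Second, even granting such a relation, the scheme as described proves a Lindemann--Weierstrass-type statement (algebraic independence of the $x_i$), which suffices for the bound $\geq n$ as stated here but is weaker than Ax's actual theorem (which gives $n+\operatorname{rank}$, i.e.\ $n+1$ in this one-variable setting); Ax's own argument runs through the module of K\"ahler differentials and a rank lemma for the logarithmic forms $dx_i/x_i - dy_i$, not through support-minimal polynomials. Your first paragraph --- the reduction to a differential field of germs with constants $\C$ and the translation of the hypothesis --- is correct and is the standard setup; but the proposal is ultimately an annotated citation to \cite{Ax} rather than a proof, which is acceptable here only because the paper itself does exactly the same.
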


Here $\td$ stands for \textit{transcendence degree} which is the maximum number of algebraically independent elements from the given set.

\begin{exercise}
    Deduce Proposition~\ref{proposition: ax-schanuel type statement} from the Ax-Schanuel theorem.
\end{exercise}

\subsection{An open mapping}

A classical result in complex analysis states that every non-constant holomorphic function of a single complex variable is an open map, i.e. it maps open sets to open sets. This follows from Rouch\'e's theorem. There is an extension of this theorem to functions of several complex variables, known as the \textit{open mapping theorem}. A special case is formulated below. 

\begin{theorem}[Special case of the open mapping theorem]\label{thm: special open mapping}
    Let $U$ be a domain in $\C^n$ and let $F:U\to \C^n$ be an analytic map (i.e. all partial derivatives of $F$ exist and are finite at every point of $U$). Assume that for every $\wbar \in \C^n$ the fibre $F^{-1}(\wbar):= \{ \zbar \in U: F(\zbar)=\wbar \}$ is finite (including when it is empty). Then $F$ is an open map, i.e. it maps open subset of $U$ to open subsets of $\C^n$.
\end{theorem}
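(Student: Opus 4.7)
The plan is to combine the complex inverse function theorem (which handles regular points directly) with a minimization argument on a small ball around a critical point. The key analytic input is that the finite fibre hypothesis forces the Jacobian $\det DF$ to vanish only on a proper analytic subset of $U$, so ``most'' points are regular and the remaining ones can be reached by a limiting procedure.

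First I would show that $\det DF$ cannot be identically zero on any non-empty open $V\seq U$. Indeed, if it were, the complex rank of $DF$ would be strictly less than $n$ everywhere on $V$, and the rank theorem would force $F$ to have positive-dimensional fibres over points of $F(V)$, contradicting the finite-fibre hypothesis. Hence the critical set $C:=\{\zbar\in U:\det DF(\zbar)=0\}$ is a proper complex analytic subset of $U$, of real dimension at most $2n-2$; in particular $F(C)$ has Lebesgue measure zero in $\C^n$. At any regular point $p\in U\setminus C$ the complex inverse function theorem makes $F$ a local biholomorphism, so openness at such points is automatic.

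The substance of the theorem lies at a critical point $p\in C$ with $F(p)=q$. Using finiteness of $F^{-1}(q)$ I would pick $r>0$ with $\overline{B(p,r)}\seq U$ and $F^{-1}(q)\cap\overline{B(p,r)}=\{p\}$, and set $\mu:=\min_{z\in\partial B(p,r)}|F(z)-q|>0$. The goal then becomes $B(q,\mu/2)\seq F(B(p,r))$. For $w\in B(q,\mu/2)$, consider the continuous function $\phi_w(\zbar):=|F(\zbar)-w|^2$ on the compact set $\overline{B(p,r)}$; it satisfies $\phi_w\geq(\mu/2)^2$ on the boundary while $\phi_w(p)=|q-w|^2<(\mu/2)^2$, so its minimum is attained at some interior point $z_0$. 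Vanishing of $\partial\phi_w/\partial\bar z_k$ at $z_0$, together with holomorphy of $F$, gives
\[ \sum_{i=1}^n (F_i(z_0)-w_i)\,\overline{\tfrac{\partial F_i}{\partial z_k}(z_0)}=0,\qquad k=1,\ldots,n, \]
which says that $F(z_0)-w$ lies in the kernel of the conjugate transpose of $DF(z_0)$. When $w$ is a regular value (so $z_0\notin C$) the Jacobian is invertible, hence $F(z_0)=w$ and $w\in F(B(p,r))$.

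To close the argument I would combine this with compactness and the smallness of $F(C)$. Since $F(C)$ has measure zero, the regular values form a dense subset of $B(q,\mu/2)$, and all of them lie in $F(B(p,r))$ by the previous step. The image $F(\overline{B(p,r)})$ is a continuous image of a compact set, hence closed, so it contains the closure of this dense subset, i.e.\ $\overline{B(q,\mu/2)}$. Finally, the boundary estimate $|F(z)-w|\geq\mu/2$ for $z\in\partial B(p,r)$ and $w\in B(q,\mu/2)$ forbids any preimage of such a $w$ from landing on $\partial B(p,r)$, so in fact $B(q,\mu/2)\seq F(B(p,r))$, giving openness at $p$. I expect the main obstacle to be the Jacobian non-vanishing step together with the measure-zero statement for $F(C)$: morally these form a Sard-type theorem for holomorphic maps, standard in complex analytic geometry but requiring slightly more than the undergraduate toolkit used elsewhere in the paper, and some care is needed to extract them cleanly from just the finite-fibre hypothesis.
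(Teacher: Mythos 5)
First, note that the paper does not actually prove Theorem~\ref{thm: special open mapping}: it is stated without proof (with a pointer to Rouch\'e's theorem in several variables), and only the special case needed later, Proposition~\ref{prop: open map}, is proved there, by a one-variable Rouch\'e argument. So your proposal stands on its own. Its preparatory material is essentially sound: the non-vanishing of $\det DF$ on open sets does follow from the finite-fibre hypothesis, provided you first pass to the open subset where the rank of $DF$ is maximal (the rank theorem needs \emph{constant} rank, so this restriction is not optional); and $F(C)$ is null because zero sets of non-trivial holomorphic functions are null and locally Lipschitz maps between spaces of equal dimension carry null sets to null sets. Openness at regular points via the inverse function theorem is also fine.

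The genuine gap is in the minimization step, which is the only step addressing the case where the theorem has content, namely a critical point $p$. The first-order condition at the interior minimizer $z_0$ says exactly that $F(z_0)-w$ lies in the kernel of the conjugate transpose of $DF(z_0)$; hence \emph{either} $F(z_0)=w$ \emph{or} $z_0\in C$. Your parenthetical ``$w$ is a regular value, so $z_0\notin C$'' does not follow: regularity of $w$ constrains the points of $F^{-1}(w)$, and $z_0$ is not known to lie in $F^{-1}(w)$ --- that is precisely what you are trying to prove. Nothing soft prevents the minimizer from being a critical point of $F$ with $F(z_0)\neq w$: the minimum modulus principle fails for vector-valued holomorphic maps (e.g.\ $g(\zeta)=(1,\zeta)$ has a strict interior minimum of $|g|$ at $\zeta=0$ with $g(0)\neq 0$, and $g'(0)$ is orthogonal to $g(0)$, which is exactly your stationarity condition). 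Since $C$ passes through $p$, you cannot shrink the ball to avoid it, and the set of $w$ for which some point of $C$ satisfies the stationarity condition is typically all of $\C^n$ (already for $F(z_1,z_2)=(z_1,z_2^2)$), so restricting to regular values does not help. Consequently the endgame (density of regular values plus closedness of $F(\overline{B(p,r)})$) has nothing to feed on. The standard ways to close this are the several-variable Rouch\'e/degree argument the paper alludes to (the number of preimages in $B(p,r)$, counted with multiplicity, is locally constant in $w$), or the Weierstrass-preparation description of finite holomorphic maps as branched covers.
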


This can be proven using Rouch\'e's theorem for several variables, but we do not give a proof since we do not need the full result in this paper. What we need is the following statement, which can be proven easily using the above theorem. We give an alternative proof based on tools from single-variable complex analysis.

\begin{proposition}\label{prop: open map}
    Let $p(\wbar)$ be a polynomial and let $r_1,\ldots,r_n$ be $\Q$-linearly independent real numbers. Then the set \[\{ (w_1e^{-r_1z}, \ldots, w_ne^{-r_nz}): z\in \C,~ \wbar \in \C^n \text{with } p(\wbar) = 0 \}\] is open in $\C^n$.
\end{proposition}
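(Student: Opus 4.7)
The plan is to reduce the openness question to an application of Rouch\'e's theorem in a single complex variable. For each $Q=(Q_1,\ldots,Q_n)\in\C^n$ I would introduce the entire function of one variable
\[ q_Q(z) := p\!\left(Q_1 e^{r_1 z}, \ldots, Q_n e^{r_n z}\right), \]
and observe that $Q$ belongs to the set $S$ in the statement if and only if $q_Q$ has a zero somewhere in $\C$: a zero $z_0$ of $q_Q$ gives $\wbar:=(Q_j e^{r_j z_0})_j$ with $p(\wbar)=0$ and $(w_j e^{-r_j z_0})_j=Q$, and conversely any $\wbar$, $z_0$ witnessing $Q\in S$ force $w_j=Q_j e^{r_j z_0}$ and hence $q_Q(z_0)=p(\wbar)=0$. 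So membership in $S$ is controlled by the zero locus of a family of entire functions depending continuously (in fact holomorphically) on the parameter $Q$.

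Expanding $p(\wbar)=\sum_\alpha c_\alpha \wbar^\alpha$ gives $q_Q(z)=\sum_\alpha c_\alpha Q^\alpha e^{\langle r,\alpha\rangle z}$, and since $r_1,\ldots,r_n$ are $\Q$-linearly independent the real exponents $\langle r,\alpha\rangle$ are pairwise distinct as $\alpha$ ranges over the support of $p$. The Vandermonde argument from Proposition~\ref{proposition: ax-schanuel type statement} then shows that $q_Q\not\equiv 0$ as long as some $c_\alpha Q^\alpha\neq 0$; this always holds, for example, when $Q\in(\C^\times)^n$, or whenever $p$ has a non-zero constant term. Assuming $q_{Q_0}\not\equiv 0$ at the fixed $Q_0\in S$ where we want to establish openness, let $z_0$ be a zero of $q_{Q_0}$ of finite order $k\geq 1$. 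Pick a closed disk $\overline D$ about $z_0$ so small that $q_{Q_0}$ has no other zero in $\overline D$, and set $\delta:=\min_{z\in\partial D}|q_{Q_0}(z)|>0$. The continuous dependence of the coefficients $c_\alpha Q^\alpha$ on $Q$ and the finiteness of the sum yield uniform convergence $q_Q\to q_{Q_0}$ on $\overline D$ as $Q\to Q_0$, so for $Q$ in a small enough neighbourhood $U$ of $Q_0$ we have $|q_Q(z)-q_{Q_0}(z)|<\delta\leq|q_{Q_0}(z)|$ on $\partial D$. Rouch\'e's theorem then gives that $q_Q$ has exactly $k\geq 1$ zeros (counted with multiplicity) inside $D$, so $q_Q$ has a zero and $U\seq S$.

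The main obstacle I expect is the degenerate possibility $q_{Q_0}\equiv 0$, which picks out a union of coordinate subspaces determined by the support of $p$ (and reveals that the proposition as stated admits edge cases: e.g.\ if $p=w_1$ then $S\seq\{Q_1=0\}$ is not open in $\C^n$, so some mild non-degeneracy on $p$ is tacitly needed). In the intended application to Theorem~\ref{thm: exp sums one var} the relevant polynomial will have a non-zero constant term, so $q_Q\not\equiv 0$ for every $Q$ and the Rouch\'e step above applies uniformly and gives openness of $S$ at each of its points.
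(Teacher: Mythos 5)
Your argument is correct and is essentially the paper's own proof: both reduce the problem to the one-variable entire function $z\mapsto p(Q_1e^{r_1z},\ldots,Q_ne^{r_nz})$, use Proposition~\ref{proposition: ax-schanuel type statement} (via the distinctness of the exponents $\langle r,\alpha\rangle$) to see it is not identically zero, and then apply Rouch\'e's theorem on a small disk around an isolated zero, with the uniform estimate on the boundary coming from the continuous dependence of the monomial coefficients on $Q$. Your observation about the degenerate case $q_{Q_0}\equiv 0$ is a genuine edge case in the statement as written; the paper's proof quietly sidesteps it by taking $\wbar_0\in(\Cx)^n$, which suffices for the application since the point produced in Step~3 of the main proof lies on $\s_1^m$ and hence has all coordinates non-zero.
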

\begin{proof}
    Let $z_0 \in \C$ and let $\wbar_0=(w_{01},\dots,w_{0n})\in (\Cx)^n$ with $p(\wbar_0) = 0$. Let $\albar=(\alpha_1,\dots,\alpha_n)$ be the point $\left( w_{01}e^{-r_1z_0},\dots, w_{0n}e^{-r_nz_0}\right).$ Then we have $$\wbar_0=\left( \alpha_1e^{r_1z_0},\dots, \alpha_ne^{r_nz_0}\right).$$
    Consider the one-variable holomorphic function $F:\mathbb{C} \rightarrow \mathbb{C}$ defined by $F(z)=p\left( \alpha_1e^{r_1z},\dots, \alpha_ne^{r_nz}\right)$. By Proposition \ref{proposition: ax-schanuel type statement}, the functions $e^{r_1z},\dots,e^{r_nz}$ are algebraically independent over $\mathbb{C}$, and therefore $F$ is not identically 0. Therefore, $z_0$ is an isolated zero of $F$. Then there is $\epsilon >0$ such that $z_0$ is the only zero of $F$ in the closed ball $B:=\overline{B(z_0,\epsilon)}$. In particular, $F$ is never 0 on the boundary $\partial B$; let $m_1:=\min \{ |F(z)| : z \in \partial B\}>0$. 

    We now write the polynomial $p(\wbar)$ as \[ p(\wbar) = \sum_{(k_1,\ldots,k_n)} c_{(k_1,\ldots,k_n)} w_1^{k_1}\cdots w_n^{k_n}.\] Let $S:=\left\{(k_1,\dots,k_n) \in \mathbb{N}^n : c_{(k_1,\dots,k_n)} \neq 0 \right\}$, and let $$m_2:=\max \left\{|c_{(k_1,\dots,k_n)}e^{r_1k_1z}\cdots e^{r_nk_nz}| : (k_1,\dots,k_n) \in S, z \in \partial B \right\}.$$ 

    Finally, consider the set $$\Omega:=\left\{\bebar \in \C^n : \left| \prod_{i=1}^n \beta_i^{k_i} - \prod_{i=1}^n \alpha_i^{k_i} \right| < \frac{m_1}{2|S|m_2} \,\, \text{ for all } (k_1,\dots,k_n) \in S \right\}.$$ As $S$ is finite, this is a finite intersection of open sets; it is therefore open, and it obviously contains $\albar$. Now, it suffices to prove the following claim.

    \textbf{Claim}: Every point in $\Omega$ has the form $ (w_1e^{-r_1z}, \ldots, w_ne^{-r_nz})$ for some $z \in \C$ and some root $\wbar$ with $p(\wbar) = 0$.

    \begin{claimproof}
        Let $\bebar \in \Omega$, and let $G_{\bebar}$ denote the one-variable function defined on $B$ by $z \mapsto p(\beta_1e^{r_1z}, \ldots, \beta_ne^{r_nz})$.

        Let $z \in \partial B$. Then we have:
        \begin{align*}
            |G_{\bebar}(z)-F(z)| &= \left| \sum_{(k_1,\dots,k_n) \in S} c_{(k_1,\dots,k_n)} \prod_{i=1}^n \beta_i^{k_i}e^{r_ik_iz} - \sum_{(k_1,\dots,k_n) \in S} c_{(k_1,\dots,k_n)} \prod_{i=1}^n \alpha_i^{k_i}e^{r_ik_iz}  \right| \\
                              &= \left| \sum_{(k_1,\dots,k_n) \in S} c_{(k_1,\dots,k_n)} \left( \prod_{i=1}^n \beta_i^{k_i}e^{r_ik_iz} -  \prod_{i=1}^n \alpha_i^{k_i}e^{r_ik_iz}\right) \right| \\
                              &= \left| \sum_{(k_1,\dots,k_n) \in S} c_{(k_1,\dots,k_n)}\prod_{i=1}^n e^{r_ik_iz} \left( \prod_{i=1}^n \beta_i^{k_i} - \prod_{i=1}^n \alpha_i^{k_i}\right) \right| \\
                              & \leq \sum_{(k_1,\dots,k_n) \in S} \left| c_{(k_1,\dots,k_n)} \prod_{i=1}^n e^{r_ik_iz} \right|\left| \prod_{i=1}^n \beta_i^{k_i} - \prod_{i=1}^n \alpha_i^{k_i} \right| \\
                              & \leq \sum_{(k_1,\dots,k_n) \in S} m_2 \cdot \frac{m_1}{2|S|m_2} \\
                              &= \frac{m_1}{2}\\
                              &<|F(z)|.
        \end{align*}        
        Therefore by Rouch\'e's theorem $G_{\bebar}$ has a zero in $B$. This means that there is $z_1 \in B$ such that $p(\beta_1e^{r_1z_1},\dots,\beta_ne^{r_nz_1})=0.$ Obviously $$\bebar=\left( \beta_1e^{r_1z_1}e^{-r_1z_1},\dots,\beta_ne^{r_nz_1}e^{-r_nz_1}\right)$$ so we are done.
    \end{claimproof}
    This finishes the proof of Proposition~\ref{prop: open map}.
\end{proof}
    
\begin{exercise}
    Give an alternative (much simpler) proof of Proposition~\ref{prop: open map} using Theorem~\ref{thm: special open mapping}.
\end{exercise}

\subsection{Proof of Theorem \ref{thm: exp sums one var}} We now prove the main theorem.

\textbf{Step 1.} First observe that by multiplying the equation by $e^{-r_0z}$ (which is never zero) we may assume $r_k>0$ for all $k>0$ and $r_0=0$. Let $0<t_1< \ldots< t_m$ be a maximal $\Q$-linearly independent subset of $\{ r_1,\ldots, r_n \}$. Then each $r_k$ can be written as a $\Q$-linear combination of the $t_1, \ldots, t_m$, i.e. $r_k = \sum_{j=1}^m s_{k,j}t_j$ with $s_{k,j}\in \Q$. We may substitute these expressions in \eqref{eqn: sum e^{r_kz}=0} and get an equation in terms of $e^{s_{k,j}t_jz}$. Let $d$ be a common denominator of all $s_{k,j}$'s. Replacing $z$ by $dz$ we see that \eqref{eqn: sum e^{r_kz}=0} can now be written as 
\begin{equation}\label{eqn: p(e^t_1z,...) = 0}
    p(e^{t_1z}, \ldots, e^{t_mz}) = 0
\end{equation}
where $p(w_1,\ldots,w_m)$ is a polynomial.

\textbf{Step 2.}  Let $q_{k,l}, 1\leq k \leq m, l\geq 0$ be rational numbers such that for each $k$ the sequence $q_{k,l}$ tends to $t_k$ as $l\to \infty$ and for each $l$ the inequalities $q_{m,l}>q_{m-1,l}>\ldots>q_{1,l}$ hold. 

For each $l$ consider the equation \[ p(e^{q_{1,l}z},\ldots,e^{q_{m,l}z}) = 0. \] Write $q_{k,l} = \frac{a_{k,l}}{b_l}$ with $a_{k,l}, b_l \in \N$. Here $b_l$ is a common denominator for $q_{1,l},\ldots,q_{m,l}$. Now we introduce a new variable $w:=e^{\frac{z}{b_l}}$ and get the equation
\begin{equation}\label{eqn: w_l poly}
   p( w^{a_{1,l}},\ldots, w^{a_{m,l}} )= 0.
\end{equation}

This is a non-trivial polynomial equation with respect to $w$, so it has complex solutions by the fundamental theorem of algebra. By Proposition \ref{prop: bound on roots}, there are two real numbers $0<R_0 \leq 1 \leq R_1$, depending only on the coefficients of $p$ (and not on $l$), such that all solutions of \eqref{eqn: w_l poly} satisfy $R_0^{\frac{1}{M_{0,l}}} \leq |w| \leq R_1^{\frac{1}{M_{1,l}}}$ where $M_{0,l}=\frac{b_lr_0}{2}$ and $M_{1,l}=\frac{b_l(r_n-r_{n-1})}{2}$.

For each $l$ we let $w_l$ be a solution of \eqref{eqn: w_l poly} and let $z_l \in \C$ be such that $e^{\frac{z_l}{b_l}} = w_l$. Since  $R_0^{\frac{1}{M_{0,l}}} \leq |w_l| \leq R_1^{\frac{1}{M_{1,l}}}$, we can conclude that \[ \frac{2}{r_0}\log R_0 = \frac{b_l}{M_{0,l}}\log R_0\leq \re(z_l) \leq \frac{b_l}{M_{1,l}}\log R_1 = \frac{2}{r_n - r_{n-1}}\log R_1.\] 
Thus, $\re(z_l)$ is bounded. If we also knew that $\im(z_l)$ is bounded, then $z_l$ would have a limit point $z^*\in \C$ which would then be a solution to \eqref{eqn: sum e^{r_kz}=0}. Unfortunately, the $z_l$'s may actually wander off to infinity (vertically), so we need a more sophisticated argument here.

\textbf{Step 3.} Let $\hat{z}_l := z_l \mod 2\pi i$, that is, $\hat{z}_l$ is a complex number such that $\hat{z}_l-z_l$ is an integral multiple of $2\pi i$ and $\im (\hat{z}_l) \in [0, 2\pi) $. Let $\hat{z}\in \C$ be a partial limit of the sequence $\hat{z}_l$.

\begin{claim}
    There is $\wbar\in \C^m$ such that $p(\wbar)=0$ and $|w_ke^{-t_k\hat{z}}|=1$ for all $k$.
\end{claim}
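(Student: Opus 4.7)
The plan is to produce $\wbar$ as an accumulation point (after passing to a further subsequence) of the natural sequence of approximations $W_{k,l} := e^{q_{k,l} z_l} = w_l^{a_{k,l}}$ in $\C^m$. By the very choice of $w_l$ as a root of \eqref{eqn: w_l poly}, the tuple $(W_{1,l}, \ldots, W_{m,l})$ already satisfies the polynomial relation $p(W_{1,l}, \ldots, W_{m,l}) = 0$ for every $l$, so if we can show the sequence is precompact in $(\Cx)^m$ and extract a limit $\wbar=(w_1^*,\ldots,w_m^*)$, continuity of $p$ automatically yields $p(\wbar)=0$.

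First I would observe that $|W_{k,l}| = e^{q_{k,l}\re(z_l)}$. After Step~1 we have $t_k>0$ for each $k$, so $q_{k,l}\to t_k$ is eventually bounded and bounded away from $0$; combined with the two-sided bound on $\re(z_l)$ obtained in Step~2, this shows $|W_{k,l}|$ lies in a compact subinterval of $(0,\infty)$ independent of $l$. In particular the tuple $(W_{1,l},\ldots,W_{m,l})$ lives in a fixed compact subset of $(\Cx)^m$. Starting from the subsequence of Step~3 along which $\hat{z}_l\to\hat{z}$, I can then extract a further (still infinite) subsequence along which $W_{k,l}\to w_k^*\in\Cx$ for every $k=1,\ldots,m$; this is just $m$ successive diagonal extractions. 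By continuity of $p$, the limit $\wbar=(w_1^*,\ldots,w_m^*)$ satisfies $p(\wbar)=0$.

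It remains to check the normalisation $|w_k^* e^{-t_k\hat{z}}| = 1$. The key identity is that $z_l = \hat{z}_l + 2\pi i n_l$ for some integer $n_l$ (by definition of $\hat{z}_l$), so
\[
W_{k,l} \;=\; e^{q_{k,l} z_l} \;=\; e^{q_{k,l}\hat{z}_l}\cdot e^{2\pi i q_{k,l} n_l},
\]
and the second factor has modulus $1$. Therefore $|W_{k,l}| = |e^{q_{k,l}\hat{z}_l}|$ for every $l$. Passing to the limit along our chosen subsequence, and using $q_{k,l}\to t_k$ and $\hat{z}_l\to\hat{z}$, we obtain $|w_k^*| = |e^{t_k\hat{z}}|$, which rearranges precisely to $|w_k^* e^{-t_k\hat{z}}|=1$.

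I do not anticipate any serious obstacle in this argument: the whole thing is a compactness/continuity exercise once one notices that the $W_{k,l}$ record the monomial information needed to recover $p(\wbar)=0$ in the limit. The only mildly delicate point is the bookkeeping of subsequences (making sure we first secure convergence of $\hat{z}_l$ from Step~3, then refine $m$ more times for the coordinates $W_{k,l}$), and a brief check that the limit $w_k^*$ is nonzero, which is automatic from the two-sided modulus bound. Note that the argument crucially does \emph{not} use Proposition~\ref{proposition: ax-schanuel type statement} or Proposition~\ref{prop: open map}: those ingredients will be needed afterwards to upgrade this ``partial solution'' on the torus $\{|w_k e^{-t_k\hat z}|=1\}_k$ into an actual zero of the original exponential sum.
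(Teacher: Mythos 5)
Your proposal is correct and follows essentially the same route as the paper: both take the sequence $w_{k,l}=e^{q_{k,l}z_l}=w_l^{a_{k,l}}$, use the two-sided bounds from Step~2 to confine it to a compact subset of $(\Cx)^m$, extract a limit point, and conclude by continuity of $p$ together with $q_{k,l}\re(z_l)=q_{k,l}\re(\hat z_l)\to t_k\re(\hat z)$. Your explicit use of $z_l=\hat z_l+2\pi i n_l$ just makes transparent a step the paper leaves implicit.
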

\begin{claimproof}
    We know that $ p(e^{q_{1,l}z_l},\ldots,e^{q_{m,l}z_l}) = 0$ and $\re(q_{k,l}z_l-t_k\hat{z}) \to 0$, hence $|e^{q_{k,l}z_l}e^{-t_k\hat{z}}|\to 1$ as $l\to \infty$. Thus, there is a sequence $\wbar_l$ such that $p(\wbar_l)=0$ and $|w_{k,l}e^{-t_k\hat{z}}|\to 1$ as $l\to \infty$. In terms of the above notation, we have $w_{k,l} = w_l^{a_{k,l}}$ and so $R_0^{\frac{2a_{k,l}}{r_0b_l}} \leq |w_{k,l}| \leq R_1^{\frac{2a_{k,l}}{(r_n-r_{n-1})b_l}}$. On the other hand $\frac{a_{k,l}}{b_l}\to t_k$, hence $|w_{k,l}|$ is bounded from above and from below by positive constants which do not depend on $l$. Thus, the sequence $\wbar_l$ has a limit point $\wbar^*\in \Cx$. Since $p(\wbar^l)=0$ and $p$ is continuous, $p(\wbar^*)=0$. Furthermore, $|w^*_ke^{-t_k\hat{z}}| =\lim_{l \to \infty} |w_{k,l}e^{-t_k\hat{z}}|=1$.
\end{claimproof}

\textbf{Step 4.} Proposition \ref{prop: density} tells us that the set  $\{ ( e^{-t_1(\hat{z}+2\pi i l)}, \ldots, e^{-t_m(\hat{z}+2\pi i l)}): l \in \Z \}$ is dense in $\s_1^m$. By Proposition \ref{prop: open map}, the set \[ E:= \{ (w_1e^{-t_1z}, \ldots, w_me^{-t_mz}): z\in \C,~ \wbar \in \C^m \text{with } p(\wbar) = 0 \}\] is open in $\C^m$ and by the above claim it intersects $\s_1^m$. Therefore, there is $l\in \Z$ such that $( e^{-t_1(\hat{z}+2\pi i l)}, \ldots, e^{-t_m(\hat{z}+2\pi i l)})\in E$.

Thus, for some $\tilde{\wbar}\in \C^m$ with $p(\wbar)=0$ and for some $\tilde{z}\in \C$ we have
\[ e^{-t_k(\hat{z}+2\pi i l)} = \tilde{w}_ke^{-t_k\tilde{z}} \text{ for all } k. \]
This implies $\tilde{w}_k = e^{t_k(\tilde{z}-\hat{z}-2\pi i l)}$ which means that $\tilde{z}-\hat{z}-2\pi i l$ is a solution to \eqref{eqn: p(e^t_1z,...) = 0}.

\section{The Exponential Closedness conjecture}

Theorem \ref{thm: exp sums one var} is actually an instance of an important open problem, the \textit{Exponential Closedness} conjecture. The idea behind this conjecture is that systems of equations involving polynomials and exponentials should always have a solution in the complex numbers, unless there is a \textit{good reason} for having no solutions. We will now introduce the ingredients in the statement of this conjecture, highlighting why these should represent ``good reasons'' for a system of equations not to be solvable in $\mathbb{C}$. We will then survey some known partial results.

\subsection{Intepreting systems by varieties}

We will deal with systems of exponential polynomial equations by looking at ``exponential points'' on complex algebraic varieties.

Exponential polynomials are expressions obtained by iterating the field operations and exponentiation, e.g. $e^{z_2e^{z_1^2}}+5e^{z_3^6+\pi z_2}-3z_1z_3^6$. However, when considering equations, we may simplify a system of such equations to another system (possibly in more variables) where exponentiation only occurs in the form $e^{z_k}$ where $z_k$ is a variable. For instance, the equation $e^{z_2e^{z_1^2}}+5e^{z_3^6+\pi z_2}-3z_1z_3^6=0$ is equivalent to the following system
\[
\begin{cases}
    z_4 = z_1^2\\
    z_5 = z_2e^{z_4}\\
    z_6 = z_3^6+\pi z_2\\
    e^{z_5}+5e^{z_6}-3z_1z_3^6=0.
    \end{cases}
\]

In general, given an equation  \[ f\left(z_1,\dots,z_n,e^{z_1},\dots,e^{z_n},e^{e^{z_1}}, \dots, e^{e^{z_n}}, \dots, e^{e^{\iddots^{e^{z_1}}}}, \dots, e^{e^{\iddots^{e^{z_n}}}}\right)=0 \] where each exponential is iterated at most $k-1$ times and $f$ is a polynomial in $nk$ variables, we can get rid of iterated exponentials by adding variables: for example, we may replace the previous equation by the following system in $2nk$ variables:
    \[\begin{cases}
    f\left( z_1,\dots,z_{nk} \right)=0 \\
    z_{n+j}=z_{nk+j} & \textnormal{ for }j=1,\dots,n(k-1)\\
     z_{nk+i}=e^{z_i} & \textnormal{ for }i=1,\dots,nk .
\end{cases}\]

We can similarly add new variables for polynomials appearing inside the exponential function, as in the example mentioned earlier. Hence, we may reduce every system of equations in polynomials and exponentials as one given by some polynomial equations in $2n$ variables $z_1,\dots,z_{2n}$, together with the equations $z_{n+i}=e^{z_i}$. The first set of equations then defines an algebraic subvariety $V$ of $\mathbb{C}^{2n}$, and the second set of equations determines a point of the form $(z_1,\dots,z_n,e^{z_1},\dots,e^{z_n})$ on $V$; this is what we will call an \textit{exponential point} on $V$.
 
To make better geometric sense of the statement of the conjecture, it can be helpful to see $V$ as an algebraic subvariety of $\mathbb{C}^n \times (\Cx)^n$, rather than one of $\mathbb{C}^{2n}$, since the exponential map is a homorphism from the additive group of complex numbers onto the multiplicative group of \textit{non-zero} complex numbers. We will take this approach. 

\subsection{Freeness and rotundity}

If $Q \leq \mathbb{C}^n$ is a linear subspace defined over $\mathbb{Q}$, then $\exp(Q)$ is an algebraic subgroup of $(\Cx)^n$. For example, if $Q=\{(z_1,z_2) \in \mathbb{C}^2 \mid z_1-2z_2=0\}$, then $\exp(Q)=\{(z_1,z_2) \in (\Cx)^2 \mid z_1z_2^{-2}=1\}$. Hence, the quotient $\pi_Q:\mathbb{C}^n \times (\Cx)^n \twoheadrightarrow \mathbb{C}^n /Q \times (\Cx)^n/\exp(Q)$ is an algebraic map.

\begin{definition}
     Let $V \subseteq \mathbb{C}^n  \times (\Cx)^n$ be an algebraic subvariety.

    We say that $V$ is \textit{vertical} (resp. \textit{horizontal}) if it has the form $\{\zbar\} \times W$ for some $\zbar \in \mathbb{C}^n$ and some subvariety $W \subseteq (\Cx)^n$ (resp. if it has the form $X \times \{\wbar\}$ for some $\wbar\in (\Cx)^n$ and some subvariety $X \subseteq \mathbb{C}^n$). 
\end{definition}

It should be clear why we cannot in general expect exponential points on horizontal or vertical varieties: if $V=\{\zbar\} \times W$ is vertical, for example, then as soon as $\exp(\zbar) \notin W$ we have no hope of finding an exponential point in $V$.

If a variety is not horizontal or vertical, but there is a linear subspace $Q \leq \mathbb{C}^n$ defined over $\mathbb{Q}$ such that $\pi_Q(V)$ is horizontal or vertical, then we risk running into the same issue. Hence we give the following definition, which rules out this type of behaviour.

\begin{definition}
    Let $V \subseteq \mathbb{C}^n \times (\Cx)^n$ be an algebraic subvariety.

    We say $V$ has \textit{no vertical projections} (resp. \textit{no horizontal projections}) if for every linear subspace $Q \leq \mathbb{C}^n$ defined over $\mathbb{Q}$, $\pi_Q(V)$ is not vertical (resp. horizontal).

    If $V$ has no vertical and no horizontal projections, then we say $V$ is \textit{free}.
\end{definition}

However, this does not rule out all bad behaviours: the dimension of $V$ should also play a role. Indeed, if $V$ is a product of curves $C_1 \subseteq \mathbb{C}^3$ and $C_2 \subseteq (\Cx)^3$, even if it is free we cannot really expect an exponential point to exist: each of $C_1$ and $C_2$ is defined by two equations, and we need three conditions to define an exponential point, so we end up with a system of seven equations in six variables.

Intuitively, we expect that $V$ should have dimension at least $n$, to avoid having an overdetermined system. Furthermore, just as above, we need to eliminate this bad behaviour in the quotients.

\begin{definition}
    Let $V \subseteq \mathbb{C}^n \times (\Cx)^n$ be an algebraic subvariety.

    We say $V$ is \textit{rotund} if for every linear subspace $Q \leq \mathbb{C}^n$ defined over $\mathbb{Q}$, $\dim \pi_Q(V) \geq n- \dim Q$.
\end{definition}

We can now state the conjecture.

\begin{conjecture}[Exponential Closedness]\label{conjecture:eac}
    Let $V \subseteq \mathbb{C}^n \times (\Cx)^n$ be an algebraic subvariety. If $V$ is free and rotund, then $V$ contains an exponential point.
\end{conjecture}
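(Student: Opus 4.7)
The plan is to lift the Khovanskii-style approximation strategy from Section~2 to the higher-dimensional setting: approximate the exponential equations defining $V$ by a sequence of purely algebraic systems, extract solutions using rotundity for the dimension count, pass to a limit point on a torus where freeness provides boundedness, and finally deform through an open-mapping step to recover a genuine exponential point.

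Concretely, I would approximate $\exp$ on a suitable domain by a sequence of polynomial or rational maps $E_l$ converging to $\exp$ uniformly on compacta, and look at the ``approximate varieties'' $V_l := V \cap \mathrm{graph}(E_l) \subseteq \C^n \times (\Cx)^n$. Rotundity is exactly the dimension-theoretic condition guaranteeing $\dim V \geq n$ and, together with its $\Q$-quotient analogues, that every such intersection is non-empty by B\'ezout-type intersection theory; freeness excludes horizontal and vertical projections, which plays the role of ``$c_0, c_N \neq 0$'' in Proposition~\ref{prop: bound on roots}, preventing coordinates of approximate solutions $(\zbar_l, \wbar_l)$ from drifting to $0$ or $\infty$. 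A suitable multivariate root bound should then give boundedness of $\re(\zbar_l)$, and reducing $\im(\zbar_l)$ coordinatewise modulo $2\pi i$ yields a partial limit $\hat{\zbar}$ together with a limiting point $\wbar^* \in V$ whose components satisfy $|w_k^* e^{-\hat{z}_k}| = 1$.

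To convert this torus limit into a genuine exponential point, I would combine an $n$-variable generalisation of Proposition~\ref{prop: density} --- density of integer $2\pi i$-translates of $\hat{\zbar}$ modulo the appropriate lattice --- with the full open mapping theorem (Theorem~\ref{thm: special open mapping}) applied to the analytic map $(\zbar, \wbar) \mapsto (w_k e^{-z_k})_k$ restricted to $V$. Finiteness of the fibres of this map, required to invoke the open mapping theorem, should follow from Ax--Schanuel (Theorem~\ref{Axsch}) together with rotundity. The image is then open and contains $\wbar^*$, so intersects the dense torus orbit, producing an exponential point of $V$.

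The hard part is the bounding step. In one variable, Proposition~\ref{prop: bound on roots} extracts uniform control on the roots of a polynomial from its coefficients; in several variables there is no comparably clean statement, and free, rotund polynomial families can still have solutions that degenerate towards a coordinate subtorus or escape to infinity. This is exactly the obstruction that keeps the full Exponential Closedness conjecture open: the known special cases (Brownawell--Masser for algebraic tori, Bays--Kirby for $\C^n$, and Gallinaro's raising-to-powers theorem) each rescue the bounding step using problem-specific machinery --- tropical geometry, Newton polytopes, or degree-of-map computations --- none of which is available for arbitrary free, rotund $V$.
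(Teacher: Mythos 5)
The statement you are addressing is Conjecture~\ref{conjecture:eac} itself: it is an open problem, the paper offers no proof of it, and your proposal does not constitute one either --- as you candidly admit in your final paragraph, the bounding step has no known general argument, and that admission means the write-up is a strategy sketch rather than a proof. The sketch does correctly transplant the architecture of Section~2 (approximate, bound, reduce modulo $2\pi i\Z^n$, then combine a density statement with an open-mapping statement), and your diagnosis of where it breaks is essentially the right one. But two steps deserve sharper criticism beyond the one you flag. First, your approximation scheme is not the one that makes the one-variable argument work: in Section~2 one approximates the \emph{exponents} $r_k$ by rationals, which converts the equation into an honest polynomial equation in a single new variable $w=e^{z/b_l}$, solvable by the fundamental theorem of algebra. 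Replacing $\exp$ itself by polynomial maps $E_l$ and intersecting $V$ with $\mathrm{graph}(E_l)$ gives you no such algebraic structure, and ``rotundity plus B\'ezout'' does not yield non-emptiness of these intersections --- rotundity is a condition on dimensions of projections of $V$ modulo rational subspaces, not an intersection-theoretic positivity statement about $V$ against an arbitrary algebraic graph. Second, the open-mapping step: the map $(\zbar,\wbar)\mapsto(w_ke^{-z_k})_k$ restricted to $V$ lands in $\C^n$ while $\dim V\geq n$, so its fibres are finite only if $\dim V=n$ exactly; cutting $V$ down to an $n$-dimensional subvariety can destroy rotundity, and Ax--Schanuel controls fibres only modulo cosets of algebraic subgroups. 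Making this step work in general is itself a nontrivial open issue, not a routine application of Theorem~\ref{thm: special open mapping}.

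For calibration against what is actually known: the paper's Theorem~\ref{thm: raising to powers} (Gallinaro) carries out your outline in the special case $V=L\times W$ with $L$ linear, and even there the bounding and density steps require Khovanskii's and Kazarnovskii's theorems together with tropical geometry, and the case where $L$ is not defined over $\R$ needs an entirely separate inductive argument because $\exp(L)$ need not be dense in $\exp(L)\cdot\s_1^n$ --- a failure mode your sketch does not address. The other known cases in the paper are Brownawell--Masser ($\dim\pi_1(V)=n$) and Mantova--Masser ($\dim\pi_1(V)=1$), which use different, analytic methods. So the honest conclusion is: your proposal is a reasonable road map for why the special cases are provable, it does not prove the conjecture, and no one expects the missing bounding argument to be elementary.
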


\begin{remark}
    Usually in applications one needs the existence of a Zariski dense set of exponential points on $V$, rather than a single exponential point. In other words, a seemingly stronger form of the conjecture states: if $V \subseteq \mathbb{C}^n \times (\Cx)^n$ is free and rotund and $V' \subsetneq V$ is a proper algebraic subvariety then there is an exponential point in $V \setminus V'$.

    Conjecture \ref{conjecture:eac} as stated actually implies this statement. Indeed, if $V' \subseteq V$ is a proper algebraic subvariety then there is a non-zero algebraic function $F:V \rightarrow \mathbb{C}$ such that $F(\vbar)=0$ for each $\vbar \in V'$. We can then consider the algebraic subvariety $V_1$ of $\mathbb{C}^{n+1} \times (\Cx)^{n+1}$ defined by $$ (z_1,\dots,z_n,z_{n+2},\dots,z_{2n+1}) \in V \textnormal{ and } z_{2n+2}=F(z_1,\dots,z_n,z_{n+2},\dots,z_{2n+1}).$$

    It is not hard to show that if $V$ is free and rotund, then $V_1$ is as well. If Conjecture \ref{conjecture:eac} is true, then $V_1$ contains an exponential point, that is, a point $(z_1,\dots,z_{n+1}, e^{z},\dots, e^{z_{n+1}})$ such that $\vbar:=(z_1,\dots,z_n,e^{z_1},\dots, e^{z_n})$ is an exponential point on $V$ and that $e^{z_{n+1}}=F(\vbar)$. In particular, $F(\vbar) \neq 0$, so that $\vbar \notin V'$.
\end{remark}

\subsection{Raising to powers}

Theorem \ref{thm: exp sums one var} is a special case of a more general result, which is in turn a special case of Conjecture \ref{conjecture:eac}.

\begin{theorem}{\cite[Theorem 8.8]{Gallinaro-exp-sums-trop}\label{thm: raising to powers}}
    Let $L \leq \mathbb{C}^n$ and $W \subseteq (\Cx)^n$ be algebraic subvarieties. If $L \times W$ is free and rotund, then $L \times W$ contains an exponential point.
\end{theorem}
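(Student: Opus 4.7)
The plan is to generalize the four-step proof of Theorem \ref{thm: exp sums one var} to the multivariate setting. After parametrizing the affine subspace $L$ as $\{A\sbar+\mathbf{b}:\sbar\in\C^d\}$, where $d=\dim L$, $A\in M_{n\times d}(\C)$, and translating $W$ so that $\mathbf{b}=0$, the search for an exponential point on $L\times W$ becomes that of finding $\sbar\in\C^d$ with $\left(e^{\langle a_1,\sbar\rangle},\ldots,e^{\langle a_n,\sbar\rangle}\right)\in W$, where $a_i$ denotes the $i$-th row of $A$. A reduction using the freeness of $L\times W$ lets us assume that the entries of $A$ are $\Q$-linearly independent, mirroring the Step~1 reduction to $\Q$-linearly independent exponents in the proof of Theorem \ref{thm: exp sums one var}. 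Now approximate each $a_{ij}$ by a rational $q_{ij,l}\to a_{ij}$ with common denominator $b_l\in\N$, and set $w_j:=e^{s_j/b_l}$: the approximated problem becomes the algebraic question of finding $\wbar\in(\Cx)^d$ with $\bigl(\prod_j w_j^{b_l q_{ij,l}}\bigr)_{i=1}^n\in W$, i.e.\ of intersecting $W$ with a coset of an algebraic subgroup of $(\Cx)^n$ of codimension $n-d$. Rotundity of $L\times W$ forces $\dim W\geq n-d$, so by Bezout's theorem in the algebraic torus this intersection is nonempty and supplies a solution $\wbar_l$.

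A multivariate analogue of Proposition \ref{prop: bound on roots}, derived by analysing the Newton polytope of the defining equations of $W$, then yields positive constants $0<R_0\leq 1\leq R_1$ independent of $l$ such that every component of $\tilde\wbar_l=\bigl(\prod_j w_{j,l}^{b_l q_{ij,l}}\bigr)_i\in W$ has modulus between $R_0^{C/b_l}$ and $R_1^{C'/b_l}$ for fixed $C,C'>0$. Consequently, writing $z_{i,l}:=\langle a_i,\sbar_l\rangle$ with $e^{s_{j,l}/b_l}=w_{j,l}$, the real parts $\re(z_{i,l})$ stay bounded in $l$. Reducing $\sbar_l$ componentwise modulo $2\pi i\Z^d$ and passing to a subsequential limit yields $\hat{\sbar}\in\C^d$ and $\wbar^*\in W$ with $|w_i^*\, e^{-\langle a_i,\hat{\sbar}\rangle}|=1$ for every $i$, exactly as in Step~3 of the proof of Theorem \ref{thm: exp sums one var}.

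A multidimensional Kronecker theorem (the rank-$d$ version of Proposition \ref{prop: density}) shows that the $\Z^d$-orbit $\{(e^{-2\pi i\langle a_i,\mathbf{m}\rangle})_i:\mathbf{m}\in\Z^d\}$ is dense in a subtorus $T\subseteq\s_1^n$ determined by $A$; the freeness hypothesis guarantees $T$ is large enough for the next step. The multivariate open mapping theorem (Theorem \ref{thm: special open mapping}) shows that $E:=\{(w_i\, e^{-\langle a_i,\sbar\rangle})_i:\sbar\in\C^d,\ \wbar\in W\}$ is open in $\C^n$, and by the previous paragraph $E$ meets $T$. Selecting $\mathbf{m}\in\Z^d$ with the orbit point at $\mathbf{m}$ in $E$, and unwinding the resulting equalities $e^{-\langle a_i,\hat{\sbar}+2\pi i\mathbf{m}\rangle}=w_i\, e^{-\langle a_i,\sbar'\rangle}$ for some $\wbar\in W$ and $\sbar'\in\C^d$, produces the complex vector $A(\sbar'-\hat{\sbar}-2\pi i\mathbf{m})\in L$ together with $\wbar\in W$ as the coordinates of the desired exponential point.

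The main obstacle is the algebraic solvability step: in the one-variable case it reduces immediately to the fundamental theorem of algebra, but in the multivariate case one must show that $W$ meets a coset of an algebraic subgroup of complementary codimension, and do so uniformly in the approximation $l$. Rotundity supplies the correct dimension count and nonemptiness follows from projective closure combined with Bezout's theorem in the torus, but making the bounds in the second step truly independent of $l$ requires a careful quantitative analysis of the defining equations of $W$ via Newton polytopes (this is essentially what the tropical machinery of \cite{Gallinaro-exp-sums-trop} accomplishes). A secondary obstacle is verifying the finite-fibre hypothesis of Theorem \ref{thm: special open mapping} for the relevant map, which needs a further reduction (again via rotundity) to the critical-dimension case $\dim W+d=n$.
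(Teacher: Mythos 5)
Your plan transplants the four-step one-variable argument, but it has two genuine gaps. First, it silently assumes $L$ is defined over $\R$. The Kronecker/density step and the reduction of $\sbar_l$ modulo $2\pi i\Z^d$ only make sense when the matrix $A$ has real entries: $|e^{\langle a_i,2\pi i\mathbf{m}\rangle}|=e^{-2\pi\langle \im(a_i),\mathbf{m}\rangle}$, which is not $1$ for non-real $a_i$, and indeed $\exp(L)$ need not be dense in $\exp(L)\cdot\s_1^n$ --- it can even be a closed analytic subgroup of $(\Cx)^n$, e.g.\ for $L=\{(z,iz)\}$. The actual proof splits into two cases precisely here, and the non-real case requires a completely different mechanism (a theorem of Kazarnovskii to produce points of $W$ staying near $\exp(L)$, followed by a tropical approximation of $W$ by a variety with non-trivial multiplicative stabiliser and an induction on quotients). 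Your proposal covers at best the real case.

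Second, within the real case, the existence step is wrong as justified. Intersecting $W$ with a coset of a $d$-dimensional algebraic subtorus need not be non-empty just because $\dim W\geq n-d$: passing to a projective (or toric) compactification and applying Bezout can place \emph{all} intersection points on the boundary, i.e.\ outside $(\Cx)^n$. Complementary dimension in the torus does not give non-empty intersection --- that failure mode is exactly what ``non-degeneracy at infinity'' excludes, and it is why Khovanskii's theorem and the tropical analysis enter the proof: rotundity only guarantees that $L\times(\sbar\cdot W)$ is non-degenerate at infinity for \emph{some} twist $\sbar\in\s_1^n$, not for the system you wrote down. The same issue undermines your ``multivariate Proposition~\ref{prop: bound on roots}'': bounding the solutions away from $0$ and $\infty$ uniformly in $l$ is not a routine Newton-polytope computation but the substance of the tropical machinery, and it can fail without the non-degeneracy hypothesis. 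So the two points you defer as technical obstacles are in fact the places where the argument, as stated, does not go through. (The paper itself only sketches the proof, deferring to \cite{Gallinaro-exp-sums-trop}; but its sketch is organised around exactly these two difficulties, which your outline elides.)
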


Indeed, if $L:=\left\{(r_1z,\dots,r_nz) \in \mathbb{C}^{n} \mid z \in \mathbb{C} \right\}$ for some real $r_n > \dots > r_1$ and $W:=\left\{ \left(w_1,\dots,w_n \right) \in (\Cx)^n \mid \sum_{k=1}^n c_kw_k=0 \right\}$ with $c_n \neq 0$, then the existence of an exponential point on $L \times W$ corresponds to the existence of a solution to $\sum_{k=1}^n c_ke^{r_k z}$, as given by Theorem \ref{thm: exp sums one var}. In this case, freeness and rotundity are simpler to define, and to verify: we invite the reader to verify the following statements. Proposition \ref{proposition: ax-schanuel type statement} will be very helpful.

\begin{itemize}
    \item[(1)] $L \times W$ has no vertical projections if and only if the real numbers $r_1,\dots,r_n$ are $\mathbb{Q}$-linearly independent, if and only if $L$ is not contained in any linear subspace of $\mathbb{C}^{n}$ defined over $\mathbb{Q}$.
    \item[(2)] $L \times W$ has no horizontal projections if and only if the polynomial $p$ does not define a coset of an algebraic subgroup of $(\Cx)^{n}$. If $p$ is irreducible, this holds if and only if $p$ has more than two monomials.
    \item[(3)] $L \times W$ is rotund if and only if, for every $a \in \Cx$, the function $\sum_{k=1}^n c_k a e^{r_kz}$ is not identically zero. 
\end{itemize}

The proof of Theorem \ref{thm: raising to powers} is divided into two cases, according to whether or not $L$ is defined over $\mathbb{R}$.

If $L$ is defined over $\mathbb{R}$ then freeness of $L \times W$ (or more precisely, the fact that $L \times W$ has no vertical projections) implies that $\exp(L)$ is dense in $\exp(L) \cdot \mathbb{S}_1^n$: this is the multi-dimensional analogue of Proposition \ref{prop: density}. This, together with a multi-dimensional analogue of Proposition \ref{prop: open map}, allows us to reduce the problem to finding a point in $(\exp(L) \cdot \mathbb{S}_1^n) \cap W$: in other words, a point $(w_1,\dots,w_n) \in W$ such that $(\log|w_1|,\dots,\log|w_n|)$ is the real part of a point in $L$. To do this, one needs a result of Khovanskii on the existence of solutions to exponential sums equations which satisfy a condition known as ``non-degeneracy at infinity''. We do not state this condition here; let us just say, informally, that it means that certain systems of equations which approximate the given one at infinity have no solutions; and that if $L \times W$ is rotund, then there is $\sbar \in \mathbb{S}_1^n$ such that the system associated to $L \times (\sbar \cdot W)$ is non-degenerate at infinity. The tools to verify that we can apply Khovanskii's result come from an area of mathematics known as \textit{tropical geometry}, which studies among other things the behaviour of algebraic subvarieties of $(\Cx)^n$ as their points approach 0 or $\infty$.

The case in which $L$ is not defined over $\mathbb{R}$ is more complicated, as in that case it is no longer true that $\exp(L)$ is dense in $\exp(L) \cdot \mathbb{S}_1^n$ (it may even be a closed analytic subgroup of $(\Cx)^n$: for example, this is the case for $L:=\{(z,iz) \mid z \in \mathbb{C} \}$). We need a previous result on exponential sums, due to Kazarnovskii, to find a sequence in $W$ of points which are not getting too far from $\exp(L)$. Then we use more tools from tropical geometry, which tell us that as we follow this sequence on $W$, it is possible to approximate $W$ locally better and better by an algebraic subvariety $W'$  of $(\Cx)^n$ which has non-trivial multiplicative stabilizer and such that $L \times W'$ is still rotund: taking the quotient by this stabilizer allows us to perform an induction and show that $\exp(L) \cap W' \neq \varnothing$, and that these approximate solutions can be ``lifted'' to points in $\exp(L) \cap W$.  

\subsection{Other known results} Using standard methods from complex analysis, it is possible to prove Conjecture \ref{conjecture:eac} in the case of $n=1$.

\begin{theorem}\label{thm: ec n=1}
    Let $V \subseteq \mathbb{C} \times \mathbb{C}^\times$ be an algebraic subvariety. If $V$ is free and rotund, then $V$ contains an exponential point.
\end{theorem}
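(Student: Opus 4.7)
The plan is to reduce to the case where some irreducible component of $V$ is cut out by an irreducible polynomial $P(z,w) \in \C[z,w]$ involving both variables nontrivially, and then produce a zero of the entire function $f(z) := P(z, e^z)$ via Hadamard's factorization theorem. First I would dispose of the trivial cases. Since $V$ is rotund, $\dim V \geq 1$, so $V$ has an irreducible component $V'$ of dimension at least $1$. If $\dim V' = 2$ then $V' = \C \times \Cx$ and any $(z, e^z)$ lies in $V$. If $V'$ is one-dimensional and cut out by $z - z_0$, then $(z_0, e^{z_0}) \in V' \subseteq V$; if $V'$ is cut out by $w - w_0$ with $w_0 \in \Cx$, then $(\log w_0, w_0) \in V' \subseteq V$. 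The remaining possibility is that $V'$ is cut out by an irreducible polynomial $P(z,w)$ in which both $z$ and $w$ appear nontrivially, and it suffices to exhibit a complex zero of $f$.

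To do so, write $P(z,w) = \sum_{k=0}^N p_k(z)\, w^k$ with polynomial coefficients $p_k$ and $p_N \not\equiv 0$. Then
\[ |f(z)| \leq \sum_{k=0}^N |p_k(z)|\, e^{k\,|\re z|} \leq C\,(1 + |z|)^d\, e^{N|z|} \]
for some constants $C, d$, so $f$ is an entire function of order at most $1$. Suppose for contradiction that $f$ has no zero. Hadamard's factorization theorem then produces $a, b \in \C$ with $f(z) = e^{az + b}$, yielding the identity
\[ \sum_{k=0}^N p_k(z)\, e^{kz} = e^b\, e^{az}. \]
As an alternative to Hadamard, I could derive the same conclusion by the Borel--Carath\'eodory plus Liouville route sketched in Exercise~\ref{exercise: Hadamard}.

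Finally, I would reach a contradiction by algebraic independence. The family $\{z^j e^{\lambda z} : j \in \Z_{\geq 0},\, \lambda \in \C\}$ is $\C$-linearly independent, essentially because $z$ and $e^z$ are algebraically independent over $\C$ (a special case of Proposition~\ref{proposition: ax-schanuel type statement}). If $a \notin \{0, 1, \ldots, N\}$, comparing coefficients of $e^{az}$ in the identity above forces $e^b = 0$, impossible. If $a = k \in \{0, 1, \ldots, N\}$, the comparison forces $P(z,w) = e^b w^k$; such a polynomial is reducible for $k \geq 2$, while for $k \in \{0,1\}$ its vanishing locus in $\C \times \Cx$ is empty, contradicting our assumption that $V'$ is a nonempty irreducible component in which both variables appear nontrivially. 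Either way we reach a contradiction, so $f$ has a zero and we obtain an exponential point on $V$. I expect the main obstacle to be less analytic than organizational: one needs to ensure the case split on the irreducible components is exhaustive. The analytic ingredient (Hadamard) and the algebraic ingredient (independence of $z$ and $e^z$) are both standard, and freeness does not even enter explicitly, since rotundity together with the structure of $\C \times \Cx$ already rules out the degenerate possibilities.
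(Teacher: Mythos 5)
Your proof is correct and follows essentially the same route as the paper's: reduce to an irreducible component cut out by an irreducible polynomial, apply Hadamard's factorisation to $P(z,e^z)$, and contradict the algebraic independence of $z$ and $e^z$. The only difference is cosmetic --- you dispose of the vertical and horizontal components by exhibiting exponential points on them directly (correctly noting that for $n=1$ rotundity alone suffices), whereas the paper invokes freeness at the end; your handling of the Hadamard step (order $\leq 1$, hence $q$ linear, then coefficient comparison) is in fact slightly more careful than the paper's.
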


\begin{proof}
    We assume $V$ is irreducible; then there is an irreducible polynomial $f \in \mathbb{C}[X,Y]$ such that $V=\{(z,w) \in \mathbb{C} \times \mathbb{C}^\times \mid f(z,w)=0 \}.$ An exponential point in $V$ is the a zero of the entire function $f(z,e^z)$. By the Hadamard factorisation theorem, if $f(z,e^z)$ has no zeros then there is $q \in \mathbb{C}[z]$ such that $f(z,e^z)=e^{q(z)}$. Then $f(X,Y)=Y^{q(X)}$; clearly this is only possible if $q(X)$ is constant, so $f(X,Y)$ does not depend on $X$. This violates freeness of $V$.
\end{proof}

This is a well-known result and can be found, for example, in \cite{marker-remark-pseudoexp} where Marker proves a stronger result, again in the case $n=1$.

Theorem~\ref{thm: ec n=1} can be generalised to any exponential polynomial equation in a single variable using a theorem of Henson and Rubel \cite{henson-rubel} stating that if an exponential polynomial has no zeroes then it is equal to the exponential of another exponential polynomial.

Theorem~\ref{thm: ec n=1} has also been generalised to the following cases of Conjecture \ref{conjecture:eac}. Let $\pi_1:\mathbb{C}^n \times (\Cx)^n \twoheadrightarrow \mathbb{C}^n$ denote the projection to the first $n$ coordinates.

\begin{theorem}{\cite[Proposition 2]{brown-masser}}
     Let $V \subseteq \mathbb{C}^n \times (\Cx)^n$ be an algebraic subvariety. If $\dim \pi_1(V)=n$, then $V$ has an exponential point.
\end{theorem}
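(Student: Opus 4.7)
The plan is to first reduce to the case that $V$ is irreducible and $\dim V = n$, and then to produce the exponential point by a several-variable analogue of the Hadamard argument used in Theorem~\ref{thm: ec n=1}.

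For the reduction, since $\dim \pi_1(V)=n$, at least one irreducible component of $V$ has the same projection dimension, so I would replace $V$ by such a component. If $\dim V > n$, the generic fibre of $\pi_1: V \to \C^n$ is positive-dimensional, so I would cut $V$ by a generic affine hyperplane of the form $\{\ell(\wbar)=c\}$, with $\ell$ linear in the $\wbar$-coordinates and $c \in \C$ generic: this decreases $\dim V$ by one and preserves $\dim \pi_1(V)=n$ (the generic $\wbar$-fibre has positive dimension, so the generic hyperplane cuts it nontrivially). Iterating, I arrive at $\dim V = n$, in which case $\pi_1 : V \to \C^n$ is a dominant generically finite map; on a Zariski open subset $V$ can then be described as the common zero locus of $n$ polynomials $P_1, \dots, P_n \in \C[\zbar, \wbar]$.

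An exponential point on $V$ corresponds to a common zero of the $n$ entire functions
\[ F_i(\zbar) := P_i(z_1, \dots, z_n, e^{z_1}, \dots, e^{z_n}), \qquad i = 1, \dots, n, \]
on $\C^n$. Algebraic independence of $e^{z_1}, \dots, e^{z_n}$ over $\C(\zbar)$, a consequence of Proposition~\ref{proposition: ax-schanuel type statement}, ensures that each $F_i$ is not identically zero. To conclude, I would invoke a several-variable value-distribution argument---for instance a Lelong--Picard or Nevanlinna-type theorem for systems of entire functions of order one---to force the common zero set $\{F_1 = \dots = F_n = 0\}$ to be non-empty. In one variable this is precisely Hadamard's factorisation, as used in the proof of Theorem~\ref{thm: ec n=1}: a zero-free entire function of the form $P(z, e^z)$ must be $e^{q(z)}$ with $q$ linear, and algebraic independence of $z$ and $e^z$ then forces $P$ to be a monomial, contradicting the hypothesis $\dim \pi_1(V) = 1$. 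An alternative reformulation is to work with a multi-valued analytic section $\sigma : \C^n \dashrightarrow (\Cx)^n$ of $\pi_1$ (whose graph is $V$) and to seek $\zbar$ with $\sigma(\zbar) = e^{\zbar}$, using symmetric functions of the branches of $\sigma$ to produce entire functions on $\C^n$ amenable to single-valued techniques.

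The main obstacle is extending the one-variable Hadamard argument to $n \geq 2$: zero-free entire maps $\C^n \to \C^n$ are considerably more flexible than in one variable, and the proof must carefully exploit both the restricted polynomial-exponential structure of the $F_i$ (they have order-one growth and depend on $\zbar$ only through $\zbar$ and $e^{\zbar}$) and the algebraic independence of Proposition~\ref{proposition: ax-schanuel type statement}. A secondary technical task is to ensure that the common zeros of $F_1, \dots, F_n$ really correspond to points on $V$ rather than to extraneous components of $\{P_1 = \dots = P_n = 0\}$, which can be handled by a careful choice of the $P_i$'s (e.g.\ via coordinate-wise resultants of the defining ideal of $V$, or by passing to a smooth birational model of $V$).
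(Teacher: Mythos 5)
The reductions in your first two paragraphs are fine: passing to an irreducible component with dominant projection, and cutting by generic hyperplanes in the $\wbar$-coordinates to reach $\dim V=n$ with $\pi_1|_V$ generically finite, are standard. The problem is that the entire content of the theorem is then concentrated in the step you defer to ``a several-variable value-distribution argument'', and no such off-the-shelf theorem exists. It is simply false that $n$ entire functions of order one on $\C^n$, none identically zero, must have a common zero: take $F_1=z_1$ and $F_2=z_1e^{z_2}+1$ on $\C^2$ (both even of the form $P_i(\zbar,e^{\zbar})$). So ``not identically zero plus order-one growth'' cannot suffice, and extending Hadamard factorisation to $n\geq 2$ is exactly the approach that is not known to work; if it did work in the generality you invoke it, it would prove essentially all of Conjecture~\ref{conjecture:eac}, since freeness and rotundity likewise only guarantee non-vanishing of the relevant functions. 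As written, your argument stops precisely where the difficulty begins.

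The hypothesis $\dim\pi_1(V)=n$ is exploited quite differently in the cited proof \cite{brown-masser}. Rather than forcing a common zero of the $F_i$ by growth considerations, one uses dominance of $\pi_1$ to produce an algebraic (multivalued) section $\sigma$ of $\pi_1|_V$ away from a proper subvariety, notes that $\sigma$ has polynomial growth, and solves $e^{\zbar}=\sigma(\zbar)$ directly: a branch of $\zbar\mapsto\Log\sigma(\zbar)$ grows like $O(\log|\zbar|)$ and hence maps a suitably large polydisc (chosen to avoid the branch locus) into itself, so a fixed-point or Rouch\'e-type argument produces $\zbar$ with $\zbar=\Log\sigma(\zbar)$, i.e.\ an exponential point. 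This is the direction your ``alternative reformulation'' gestures at, and it is the one to develop; the real work is in controlling the branch locus and fixing a single-valued branch on a simply connected region, not in symmetric functions of the branches. A minor further point: the algebraic independence of $e^{z_1},\dots,e^{z_n}$ over $\C(z_1,\dots,z_n)$ as functions of $n$ variables does not follow from Proposition~\ref{proposition: ax-schanuel type statement}, which concerns the one-variable functions $e^{r_kz}$; cite Theorem~\ref{Axsch} instead, or prove it directly by comparing monomials.
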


\begin{theorem}{\cite[Theorem 1.1]{mantova-masser}}
     Let $V \subseteq \mathbb{C}^n \times (\Cx)^n$ be an algebraic subvariety. If $V$ has no vertical projections and $\dim \pi_1(V)=1$, then $V$ has an exponential point.
\end{theorem}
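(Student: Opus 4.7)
My plan is to try to extend the strategy used for Theorem~\ref{thm: ec n=1} to the higher-dimensional setting. First, I would reduce to the case where $V$ is irreducible; since $\dim\pi_1(V)=1$, by cutting $V$ with generic hyperplane sections of its fibres over $\pi_1(V)$ I would further try to reduce to $\dim V=1$, after checking that this does not destroy the ``no vertical projections'' hypothesis. Once $V$ is a curve, taking a normalisation gives a holomorphic parametrisation $t\mapsto(\gamma(t),\delta(t))\in V$ from a Riemann surface $\tilde V$, which I would lift to the universal cover (a disc or the plane) so that the $\gamma_j$ and $\delta_j$ become genuine holomorphic functions on a domain in $\mathbb{C}$. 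An exponential point on $V$ then corresponds to a common zero of the holomorphic functions $h_j(t):=\delta_j(t)e^{-\gamma_j(t)}-1$ for $j=1,\dots,n$.

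The next step is to unpack the ``no vertical projections'' hypothesis, which says precisely that $\pi_1(V)$ is not contained in any proper $\mathbb{Q}$-rational affine subspace of $\mathbb{C}^n$, or equivalently that no non-trivial $\mathbb{Q}$-linear combination of $\gamma_1,\dots,\gamma_n$ is constant. By the Ax-Schanuel theorem (Theorem~\ref{Axsch}) this forces $\td_{\mathbb{C}}\mathbb{C}(\gamma_1,\dots,\gamma_n,e^{\gamma_1},\dots,e^{\gamma_n})\geq n$. On the other hand, since the $\delta_j$ live in the function field of a one-dimensional curve, they lie in a field of transcendence degree $1$ over $\mathbb{C}$. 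This mismatch should guarantee that the $h_j$ are genuinely non-trivial; in particular no $h_j$ can be identically $1$, and no algebraic identity among the $\delta_j/e^{\gamma_j}$ can force the $n$ equations $h_j=1$ to be collectively unsolvable for purely algebraic reasons.

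Finally, I would try to assemble a single scalar entire function $\Phi$ (for instance a resultant built from the $\delta_j-e^{\gamma_j}$ against a polynomial defining $V$) whose zeros contain the simultaneous zeros of $h_1-1,\dots,h_n-1$, and apply a Hadamard-type factorisation: if $\Phi$ had no zeros, it would equal $e^{\psi}$ for some entire $\psi$ of controlled growth, and comparing this identity with the Ax-Schanuel lower bound should produce a contradiction, mirroring the argument used for Theorem~\ref{thm: ec n=1}. The main obstacle I foresee is precisely the simultaneous nature of the system: in the $n=1$ case Hadamard alone suffices because there is one equation in one parameter, but for $n\geq 2$ we have $n$ equations in one complex parameter, so a genuine value-distribution input of Nevanlinna type seems necessary to force the $h_j-1$ to share a common zero rather than merely each having many zeros of their own. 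I expect that the subtlety of Mantova and Masser's proof lies in exactly this coordination step, and in arranging the $\mathbb{Q}$-linear data provided by ``no vertical projections'' so that the resulting scalar function is both non-constant and of finite order.
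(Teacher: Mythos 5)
First, a point of order: the paper does not prove this statement at all --- it is quoted verbatim from Mantova--Masser with a citation --- so there is no in-paper argument to compare yours against; I can only assess your plan on its own terms. The decisive problem is your very first reduction. By slicing the fibres of $V\to\pi_1(V)$ down to points you convert the problem into $n$ simultaneous equations $\delta_j(t)=e^{\gamma_j(t)}$ in the single parameter $t$, and that overdetermined problem is genuinely unsolvable in general: take $n=2$ and $V=\{(t,\sqrt{2}\,t)\mid t\in\mathbb{C}\}\times\{(1,e)\}$. This curve has no vertical projections (its $\pi_1$-image is a line in no translate of a $\mathbb{Q}$-rational subspace) and $\dim\pi_1(V)=1$, yet $e^{t}=1$ forces $t\in 2\pi i\mathbb{Z}$ while $e^{\sqrt{2}t}=e$ forces $\mathrm{Re}(\sqrt{2}t)=1$, so there is no exponential point. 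This shows two things at once: the statement as transcribed in the survey is missing a hypothesis from the original (Mantova--Masser require $V$ to be free and rotund, so in particular $\dim V\geq n$ and the fibres over the curve $\pi_1(V)$ are hypersurfaces, not points), and your outline --- which only ever invokes ``no vertical projections'' --- can never close the gap you yourself flag at the end, because the example satisfies every ingredient you use. No value-distribution input will manufacture a common zero of $e^{-t}-1$ and $e\cdot e^{-\sqrt{2}t}-1$.

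The correct shape of the argument is the opposite of your reduction: one must \emph{keep} the $(n-1)$-dimensional fibres, so that the condition for an exponential point becomes a \emph{single} scalar equation $f_{\gamma(t)}\bigl(e^{\gamma_1(t)},\dots,e^{\gamma_n(t)}\bigr)=0$ along the curve --- one equation in one unknown, exactly as in Theorem~\ref{thm: ec n=1} and Theorem~\ref{thm: raising to powers}. The difficulty then lies in showing that this one function of $t$ has a zero, which Mantova and Masser achieve by delicate growth and perturbation (Rouch\'e-type) estimates combined with the Diophantine information coming from freeness, not by a bare Hadamard factorisation. Two smaller issues: your appeal to Ax--Schanuel correctly shows each individual equation $h_j=0$ is non-degenerate, but says nothing about simultaneous solvability (again, see the example); and the proposed ``resultant of the $\delta_j-e^{\gamma_j}$'' is not a meaningful construction for entire functions of one variable --- there is no variable to eliminate, and a product or similar combination having a zero only witnesses a zero of \emph{some} $h_j$, not a common one.
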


Together, these results imply Conjecture \ref{conjecture:eac} for all subvarieties of $\mathbb{C}^2 \times (\Cx)^2$.

\subsection{Relation to Schanuel's conjecture}

We conclude the paper with a brief discussion about the relation between Conjecture \ref{conjecture:eac} and Schanuel's conjecture, a famous open problem in transcendental number theory.

\begin{conjecture}
    Let $z_1,\dots,z_n$ be $\mathbb{Q}$-linearly independent complex numbers.

    Then $\textnormal{trdeg}_{\mathbb{Q}}(z_1,\dots,z_n,e^{z_1},\dots,e^{z_n}) \geq n$.
\end{conjecture}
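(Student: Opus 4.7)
Schanuel's conjecture is the central open problem of transcendental number theory, so any proof proposal here is necessarily speculative. What I would try to do is outline the two most natural lines of attack suggested by the material developed in this paper, and then be honest about why each one stalls at a hard obstacle.

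The first, and most seductive, approach is a specialisation argument from the functional analogue, namely the Ax--Schanuel theorem (Theorem~\ref{Axsch}). The strategy would be: given $\Q$-linearly independent complex numbers $z_1,\dots,z_n$, construct entire functions $f_1,\dots,f_n$ on some domain $U\seq\C$ satisfying $f_k(t_0)=z_k$ at some point $t_0\in U$, with no $\Q$-linear combination of the $f_k$'s constant, and then apply Theorem~\ref{Axsch} to conclude $\td_\C \C(f_1,\dots,f_n,e^{f_1},\dots,e^{f_n})\geq n$. One would then attempt to specialise at $t_0$ to recover $\td_\Q \Q(z_1,\dots,z_n,e^{z_1},\dots,e^{z_n})\geq n$. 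The fundamental obstacle is that algebraic independence of functions does not transfer to their values at any single point: a function that is transcendental over $\C$ may nevertheless take algebraic values, so specialisation can only lose information, never gain it. This is precisely why Ax--Schanuel, despite being a complete theorem, does not yield Schanuel's conjecture.

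The second approach is to prove Schanuel conditionally on Exponential Closedness (Conjecture~\ref{conjecture:eac}) together with additional transcendence inputs. This is essentially the strategy of Zilber's pseudo-exponentiation programme: in the pseudo-exponential field $\Bexp$, Schanuel's inequality holds by construction, and Exponential Closedness is the other main axiom; if one could prove that $\Cexp$ is isomorphic to $\Bexp$, Schanuel would follow. The key obstacle here is circular in a precise sense: categoricity of the $\Bexp$ axiomatisation by Zilber and Kirby uses Schanuel's inequality as an input, so one cannot hope to derive Schanuel from Exponential Closedness alone without some genuinely new transcendence ingredient.

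The realistic partial programme, then, would be to build up from known cases: the Lindemann--Weierstrass theorem handles the case where $z_1,\dots,z_n$ are all algebraic, and Baker's theorem handles the case where $e^{z_1},\dots,e^{z_n}$ are all algebraic. One would try to interpolate between these two extremes by an inductive argument, perhaps using the tropical and complex-analytic machinery outlined earlier (in the spirit of the proof of Theorem~\ref{thm: raising to powers}) to control approximations $z_k \approx q_k + \alpha_k$ where $q_k\in\Q$ and $\alpha_k$ is algebraic. The hard part, and the reason Schanuel remains open after half a century, is that no currently available transcendence method is powerful enough to produce genuinely new algebraically independent values of the exponential function beyond the one-dimensional case of Hermite--Lindemann--type arguments; the multidimensional estimates needed to combine algebraic and exponential transcendence in the same tuple simply do not exist. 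I would not expect to resolve this in a short proof sketch.
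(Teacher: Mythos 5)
This statement is Schanuel's conjecture, which the paper presents as a famous open problem and for which it offers no proof; your refusal to claim a proof, and your survey of why the natural attacks (specialising Ax--Schanuel, or routing through Exponential Closedness and Zilber's pseudo-exponential field) both stall, is the correct response and matches the paper's treatment. One small correction: the case where all $e^{z_k}$ are algebraic is not ``handled by Baker's theorem'' --- Baker only gives $\overline{\Q}$-linear independence of logarithms, whereas Schanuel in that case asserts their full algebraic independence, which is itself a well-known open conjecture.
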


There are no known implications between Schanuel's conjecture and Exponential Closedness (and it is not expected that there are any). However, these conjectures are often discussed together because of a certain ``duality'' between them: Schanuel's conjecture can be seen as saying that if $V \subseteq \mathbb{C}^n \times (\Cx)^n$ is an algebraic subvariety with $\dim V <n$, defined over $\mathbb{Q}^{\alg}$, and $\vbar$ is an exponential point in $V$, then $\vbar$ is not generic in $V$ over $\mathbb{Q}^{\alg}$, unless $V$ itself is contained in a coset of a proper algebraic subgroup of $\mathbb{C}^n \times (\Cx)^n$.

Hence, while Exponential Closedness gives sufficient conditions for the existence of exponential points, Schanuel's conjecture gives a necessary condition for an exponential point in a variety defined over $\mathbb{Q}^{\alg}$ to be generic over $\mathbb{Q}^{\alg}$. In other words, Exponential Closendess somehow gives an ``upper bound'' on the transcendence of the exponential function, predicting that given algebraic relations of a certain form there need to be exponential points that satisfy them; while Schanuel's conjecture gives a ``lower bound'', predicting that exponential points are never going to satisfy too many algebraic relations.

We have already mentioned a form of Schanuel's conjecture for function fields that was established by Ax, see Theorem \ref{Axsch}. A complex version of Ax's theorem states that if $V \subseteq \mathbb{C}^n \times (\Cx)^n$ is an algebraic subvariety, $\Gamma_{\exp}$ is the graph of the exponential function $\exp:\mathbb{C}^n \rightarrow (\Cx)^n$, and $V \cap \Gamma_{\exp} \neq \varnothing$, then the dimension of the complex analytic set $V \cap \Gamma_{\exp}$ is precisely $\dim V - n$, unless some irreducible component of it is contained in a coset of an algebraic subgroup of $\mathbb{C}^n \times (\Cx)^n$. This also suggests some sort of duality with Exponential Closedness: Ax's Theorem says that if an intersection exists, then it should have the expected dimension, modulo the behaviour of certain algebraic subgroups; Exponential Closedness predicts that if the dimension of $V$ is big enough for us to expect that $V$ intersects $\Gamma_{\exp}$, then modulo the behaviour of certain algebraic subgroups the intersection is indeed non-empty.

\bibliographystyle{alpha}
\bibliography{ref}

\end{document}